\documentclass[11pt,letterpaper,reqno]{amsart}

\usepackage[T1]{fontenc}
\usepackage{amsmath,amssymb,amsthm,amsfonts,mathtools}
\usepackage{enumitem}
\usepackage{booktabs}
\usepackage{microtype}
\usepackage{doi}
 \usepackage{bookmark}
 \usepackage{hyperref}
 \hypersetup{pdfstartview={FitH},colorlinks=true,linkcolor=blue,citecolor=blue,urlcolor=blue}
\providecommand{\doi}[1]{doi:\,\texttt{#1}}

\usepackage{array}
\usepackage{tabularx}

\newtheorem{thm}{Theorem}[section]
\newtheorem{lem}[thm]{Lemma}
\newtheorem{prop}[thm]{Proposition}
\newtheorem{cor}[thm]{Corollary}

\newtheorem{conj}[thm]{Conjecture}
\theoremstyle{definition}

\newtheorem{rem}[thm]{Remark}

\numberwithin{equation}{section}

\DeclareMathOperator{\diag}{diag}
\DeclareMathOperator{\Ree}{Re}
\DeclareMathOperator{\Imm}{Im}

\newcommand{\HH}{\mathcal H}
\newcommand{\AD}{\mathcal A}

\begin{document}

\title[Sharp condition-number bounds for Higham matrices]
{Sharp condition-number bounds for growth factors of Higham matrices in Gaussian elimination}

\author[T.~Zhang]{Teng Zhang}

\address{School of Mathematics and Statistics, Xi'an Jiaotong University, Xi'an 710049, P. R. China}
\email{teng.zhang@stu.xjtu.edu.cn}

\subjclass[2020]{15A23, 65F05, 65F35}

\keywords{Higham matrix; accretive-dissipative matrix; Gaussian elimination; growth factor; condition number; Schur complement}

\dedicatory{Dedicated to the memory of Professor Nicholas J.~Higham.}

\begin{abstract}
	Higham's conjecture on the growth factor of complex symmetric positive
	definite matrices is a longstanding problem in the stability theory of
	Gaussian elimination without pivoting.  It asserts that  every complex matrix \(A=B+iC\) with \(B\) and \(C\) real symmetric positive
	definite, is  called Higham matrix and has growth factor $\rho_n(A)<2$. In 2013, Drury [Linear Algebra Appl. \textbf{439} (2013), no.~10, 3129--3133] proved that \(\rho_n(A)\le 2\). In fact, we will see his sectorial determinant method can be refined to
	give the strict bound \(\rho_n(A)<2\) for each fixed Higham matrix; however,
	the resulting constant \(1+\delta_A^2\) depends on the matrix \(A\).  In this
	paper, we establish sharp condition-number-dependent lower and upper bounds
	for the growth factors of Higham matrices, thereby providing a quantitative
	refinement of Drury's result.  The main ingredient is a sharp scalar
	Schur-complement inequality, proved via a two-dimensional domination.
	We also obtain corresponding sharp scalar and diagonal estimates for
	 accretive-dissipative matrices, and an improved
	entrywise growth bound for that broader class.
\end{abstract}

\maketitle
\tableofcontents
\section{Introduction}\label{sec:intro} Consider the linear system
\[
Ax=b,\qquad A\in\mathbb C^{n\times n}\text{ with } n\ge 2.
\]
After \(k\) steps of Gaussian elimination without pivoting, where
\(1\le k\le n-1\), the transformed system has the block form
\[
\begin{bmatrix}
	* & *\\
	0 & A^{(k)}
\end{bmatrix}x=b^{(k)}.
\]
Here
$
A^{(k)}=(a_{ij}^{(k)})_{i,j=1}^{n-k}
\in\mathbb C^{(n-k)\times(n-k)}
$
is the active matrix remaining after the first \(k\) elimination steps.
  The  growth factor of $A$ in Gaussian elimination is defined by
\begin{equation}\label{eq:post-growth}
       \rho_n(A)=
       \frac{\max_{1\le k\le n-1}\max_{i,j}|a_{ij}^{(k)}|}
            {\max\limits_{i,j}|a_{ij}|}.
\end{equation}
Wilkinson's backward error analysis makes such growth quantities central to the stability of Gaussian elimination: when no pivoting is used, the computed factors are reliable only if the elements appearing during elimination remain controlled relative to the original entries; see, for instance, the discussion in \cite[Chapter~9]{HighamBook}.  For several classical classes this control is automatic: Hermitian positive definite matrices and totally nonnegative matrices have standard growth factor $1$, while row or column diagonally dominant matrices satisfy a small uniform bound.  Higham's class was introduced as a non-Hermitian, complex symmetric class in which comparably strong stability properties might still persist; see \cite{Higham1998}.  This is precisely the class studied in this paper.

\subsection{Higham's conjecture for the growth factors of Higham matrices}
Higham \cite[p.~1592]{Higham1998} introduced the notion of a complex symmetric positive definite (CSPD) matrix. Specifically, a complex symmetric matrix
\[
A = B + iC,\qquad B,C\in\mathbb R^{n\times n},
\]
is said to be CSPD if both \(B\) and \(C\) are real symmetric positive definite. Such matrices are also referred to as \emph{Higham matrices}. We denote the set of all Higham matrices of order \(n\) by \(\HH_n\), that is,
\[
\HH_n := \{P+iQ : P,Q\in\mathbb R^{n\times n}\text{ are symmetric positive definite}\}.
\]

  Higham \cite[Lemmas~2.1--2.3]{Higham1998} proved the basic structural facts needed for Gaussian elimination: Higham matrices are nonsingular, their leading principal submatrices are nonsingular, and Schur complements of principal blocks are again Higham.  Higham  \cite[Lemma~2.4]{Higham1998} also recorded the crucial diagonal dominance in modulus: for $A=(a_{ij})\in \HH_n$,
\[
       |a_{ij}|<\sqrt{|a_{ii}|\,|a_{jj}|}\qquad (i\ne j),
\]
so the largest entry in modulus lies on the diagonal.  These facts reduce the analysis of element growth to the behavior of scalar diagonal Schur complements. Later in \cite[Theorem~2.5]{Higham1998},
Higham claimed to have proved the following result, now known as ``Higham's conjecture''.  
\begin{conj}[Higham]\label{conj:Higham}
	Let $A\in \HH_n$. Then 
	$$\rho_n(A)<2,$$
	where the constant $2$ is sharp.
\end{conj} We note that his sharpness example in \cite[Theorem~2.5]{Higham1998}  is valid. It involves a singular limiting \(2\times2\) matrix, regularized by adding \(\varepsilon(1+i)I\), and shows that the class supremum is \(2\), although no fixed Higham matrix on the boundary is admissible.  The proof of \cite[Theorem~2.5]{Higham1998}, however, contains an invalid step: it squares one-sided real and imaginary inequalities without first having the corresponding lower bounds.  Higham later noted in his book that the proof of the growth-factor bound in  \cite[Theorem~2.5]{Higham1998} is incorrect and posed the sharp bound as an open research problem; see \cite[p.~210]{HighamBook} and \cite[p.~212, Problem~10.12]{HighamBook}.

 George, Ikramov and Kucherov \cite{GeorgeIkramovKucherov2002} introduced and studied the generalized Higham class $A=B+i C$ with $B$ and $C$ Hermitian positive definite. 
 Such matrices are also called  \emph{generalized Higham matrices} or \emph{accretive-dissipative matrices}. We denote the set of all generalized Higham matrices of order \(n\) by \( \AD_n\), that is,
 \[
 \AD_n:=\{B+i C: B,C\in\mathbb C^{n\times n}\text{ are Hermitian positive definite}\}.
 \]  Clearly, $\HH_n\subset\AD_n$. For an invertible matrix \(X\), its spectral condition number is denoted by
$
 \kappa(X)=\|X\|_\infty\,\|X^{-1}\|_\infty,
$
 where \(\|\cdot\|_\infty\) denotes the spectral norm.
The subsequent development of Conjecture~\ref{conj:Higham} can be summarized as follows; a good survey see \cite{Zhang2014MatrixDecomposition}.
Table~\ref{tab:higham-history} concerns the real-symmetric Higham class
$\HH_n$
whereas Table~\ref{tab:ad-history} concerns the larger accretive-dissipative class $\AD_n$.

\begin{table}[htbp]
	\centering
	\renewcommand{\arraystretch}{1.18}
	\begin{tabularx}{\textwidth}{@{}
			>{\centering\arraybackslash}p{0.25\textwidth}
			>{\raggedright\arraybackslash}X
			>{\raggedright\arraybackslash}p{0.31\textwidth}
			@{}}
		\toprule
		Upper bound for \(\rho_n(A)\) & Hypotheses & Reference \\
		\midrule
		\(\displaystyle \frac{1+\sqrt{17}}{4}\)
		&
		Buckley's normalized subclass \(A=I+iC\), \(C=C^T\) positive definite
		&
		Ikramov--Kucherov \cite{IkramovKucherov2000}
		\\[1mm]
		
		\(<3\)
		&
		\(A\in\HH_n\)
		&
		George--Ikramov--Kucherov \cite{GeorgeIkramovKucherov2002}
		\\[1mm]
		
		bound tending to \(1\) as \(\alpha\downarrow0\)
		&
		\(A=B+iC\in\HH_n\) with \(C\le \alpha B\)
		&
		George--Ikramov \cite{GeorgeIkramov2004}
		\\[1mm]
		
		\(<2\sqrt2\)
		&
		\(A\in\HH_n\)
		&
		Lin \cite{Lin2014}
		\\[1mm]
		
		\(\displaystyle
		\sqrt2\left[
		1+\left(\frac{\omega-1}{\omega+1}\right)^2
		\right]\)
		&
		\(A\in\HH_n\), \(\omega=\max\{\kappa(B),\kappa(C)\}\)
		&
		Yang \cite{Yang2014}
		\\[1mm]
		
		\(< 2\)
		&
		\(A\in\HH_n\)
		&
	Drury \cite{Drury2013}; see Remark~\ref{rem:drury-determinant}
		\\[1mm]
		
		\(\displaystyle
		\frac{2(1+\omega^2)}{(1+\omega)^2}\)
		&
		\(A\in\HH_n\), \(\omega=\max\{\kappa(B),\kappa(C)\}\)
		&
		this paper, Theorem~\ref{thm:main}
		\\
		\bottomrule
	\end{tabularx}
	\caption{Representative upper bounds for the growth factor
		\(\rho_n(A)\) in the Higham class \(\HH_n\).}
	\label{tab:higham-history}
\end{table}

\begin{table}[htbp]
	\centering
	\renewcommand{\arraystretch}{1.18}
	\begin{tabularx}{\textwidth}{@{}
			>{\centering\arraybackslash}p{0.25\textwidth}
			>{\raggedright\arraybackslash}X
			>{\raggedright\arraybackslash}p{0.31\textwidth}
			@{}}
		\toprule
		Upper bound for \(\rho_n(A)\) & Hypotheses & Reference \\
		\midrule
		\(<3\sqrt2\)
		&
		\(A\in\AD_n\)
		&
		George--Ikramov--Kucherov \cite{GeorgeIkramovKucherov2002}
		\\[1mm]
		
		\(<4\)
		&
		\(A\in\AD_n\)
		&
		Lin \cite{Lin2014}
		\\[1mm]
		
		\(\displaystyle
		2\left[
		1+\left(\frac{\omega-1}{\omega+1}\right)^2
		\right]\)
		&
		\(A\in\AD_n\), \(\omega=\max\{\kappa(B),\kappa(C)\}\)
		&
		Yang \cite{Yang2014}
		\\[1mm]
		
		\(<2\sqrt2\)
		&
		\(A\in\AD_n\)
		&
		Drury
		\cite{Drury2013}; see Remark~\ref{rem:drury-determinant}
		\\[1mm]
		
		\(\displaystyle
		\frac{2\sqrt{2}(1+\omega^2)}{(1+\omega)^2}\)
		&
		\(A\in\AD_n\), \(\omega=\max\{\kappa(B),\kappa(C)\}\)
		&
		this paper, Theorem~\ref{thm:generalized}
		\\
		\bottomrule
	\end{tabularx}
	\caption{Representative upper bounds for the  growth factor
		\(\rho_n(A)\) in the accretive-dissipative class \(\AD_n\).}
	\label{tab:ad-history}
\end{table}
\begin{rem}\label{rem:drury-determinant}
	Drury's result in \cite[Immediately after Theorem~1.3]{Drury2013} is formulated primarily as a sectorial
	Fischer determinantal inequality.  Although Drury implicitly stated only the consequence
	\(\rho_n(A)\le 2\), his method can in fact be refined to yield the strict
	bound \(\rho_n(A)<2\) for every fixed Higham matrix $A$. To avoid any possible ambiguity about how
	this determinant inequality implies the quoted growth-factor bound, we put the argument in Appendix~\ref{app:drury}.
\end{rem}

Tables~\ref{tab:higham-history} and~\ref{tab:ad-history} show the two main
features of the problem.  First, the Higham class enjoys a special
diagonal-maximality property, which allows scalar diagonal Schur-complement
bounds to control the full entrywise growth.  Second, this property is lost in
the accretive-dissipative class, where an additional diagonal-to-entry factor is
unavoidable in the standard reduction. This paper gives a direct Schur-complement proof of sharp
condition-number-dependent bounds and, as a consequence, recovers the sharp
uniform Higham bound. We also obtain the corresponding
sharp scalar and diagonal estimates in the larger accretive-dissipative class.

\subsection{Gaps in Guo--Gu--Li's paper}

A recent paper of Guo, Gu, and Li \cite{GuoGuLi} states the
condition-number estimate
\[
\frac{4\omega}{(1+\omega)^2}
\le \rho_n(A)
\le \frac{2(1+\omega^2)}{(1+\omega)^2},
\qquad
A=B+iC\in\AD_n,\qquad
\omega=\max\{\kappa(B),\kappa(C)\}.
\]
Guo, Gu, and Li~\cite{GuoGuLi} then claimed to have resolved
Conjecture~\ref{conj:Higham}.  However, the first proof of Conjecture~\ref{conj:Higham}
was already given by Drury~\cite{Drury2013}, and this priority issue is not
reflected in their paper.
Their first arXiv version of that work appeared in 2013 \cite{GuoGuLi}.  After a gap in that
version was pointed out by Drury \cite[p. 3131, line 5--line 7]{Drury2013}, the authors posted a revised version in 2025 \cite{GuoGuLi}
and asserted \cite[Acknowledgment]{GuoGuLi} that the gap had been fixed.  We show below that the
same gap remains.    Although the paper has recently appeared online in \emph{Mathematics of
	Computation}, the following three points show why its condition-number proof
cannot be used as stated. That is, their proof is invalid. We refer to the arXiv v2 version when identifying
the specific gaps in the argument.

\subsubsection*{Gap 1: the componentwise-to-modulus step is invalid}
In the notation of a scalar Schur complement, write
\[
q=\beta^T A_k^{-1}\alpha.
\]
In \cite[p. 7, begin with ``With the help of ...'']{GuoGuLi}, they obtained inequalities of the type
\[
\Ree q\le M_1,
\qquad
\Imm q\le M_2,
\]
with $M_1,M_2\ge0$, and then uses them as if they implied
\[
|q|\le \sqrt{M_1^2+M_2^2}.
\]
This implication is false unless one also knows lower bounds $\Ree q\ge -M_1$ and $\Imm q\ge -M_2$.

The failure occurs inside the accretive-dissipative class.  Let $0<r<1$ and consider the $2\times2$ block with leading scalar $1+i$,
\[
B=\begin{bmatrix}1&r\\ r&1\end{bmatrix},
\qquad
C=\begin{bmatrix}1&i r/2\\ -i r/2&1\end{bmatrix}.
\]
Then $B$ and $C$ are Hermitian positive definite.  For the scalar update one has
\[
\begin{aligned}
	q&=(r+i(-i r/2))(1+i)^{-1}(r+i(i r/2))  
	=\frac{3r^2}{8}(1-i).
\end{aligned}
\]
Here
\[
c^*C_{11}^{-1}c=\frac{r^2}{4},
\qquad
\Imm q=-\frac{3r^2}{8}.
\]
Thus $\Imm q\le r^2/4$ is true but
\[
(\Imm q)^2=\frac{9r^4}{64}>\frac{r^4}{16}
=\left(c^*C_{11}^{-1}c\right)^2.
\]
So the subsequent squaring step has no logical basis. This invalidates all the subsequent arguments.

\subsubsection*{Gap 2: diagonal maximality is not available for generalized Higham matrices}
For Higham matrices, \cite[Lemma~2.2]{GuoGuLi} says that every active matrix has its largest entry on the diagonal.  This is a special real-symmetric feature.  It is false for accretive-dissipative matrices.

Indeed, for $0<r<1$ let
\[
B_r=\begin{bmatrix}1&r\\ r&1\end{bmatrix},
\qquad
C_r=\begin{bmatrix}1&-i r\\ i r&1\end{bmatrix}.
\]
Then
\[
A_r=B_r+i C_r
=\begin{bmatrix}1+i&2r\\0&1+i\end{bmatrix}
\]
is accretive-dissipative.  If $r>1/\sqrt2$, then
\[
|(A_r)_{12}|=2r>\sqrt2=|(A_r)_{11}|=|(A_r)_{22}|.
\]
Thus any proof for generalized Higham matrices must control off-diagonal active entries directly or pay an additional entry-versus-diagonal factor.  A diagonal-only argument is sufficient for Higham matrices but not for the whole accretive-dissipative class.

\subsubsection*{Gap 3: the lower growth-factor bound depends on the convention}
The displayed lower bound
\[
\frac{4\omega}{(1+\omega)^2}\le\rho_n(A)
\]
is harmless but non-sharp if $\rho_n(A)$ denotes the standard growth factor including $k=0$: since $\omega\ge1$,
\[
0<\frac{4\omega}{(1+\omega)^2}\le1,
\]
and the inclusion of $A^{(0)}=A$ gives the trivial sharp lower bound $1$.

Under the post-elimination convention \eqref{eq:post-growth}, however, the same displayed lower bound is generally false for $\omega>1$.  The sharp universal lower bound is $1/\omega$, as proved in Theorem~\ref{thm:main}.  The diagonal example
\[
A=(1+i)\diag(\omega,1,\ldots,1)
\]
has $\kappa(\Ree A)=\kappa(\Imm A)=\omega$ and satisfies
\[
\rho_n(A)=\frac1\omega
<\frac{4\omega}{(1+\omega)^2}
\qquad (\omega>1).
\]

\subsection{Sharp condition-number bounds for $\rho_n$} 
We now state the main result of the paper.   The first theorem gives sharp two-sided
condition-number bounds for Higham matrices.
\begin{thm}\label{thm:main}
	Let $A=B+i C\in\HH_n$ and suppose that
$
	\max\{\kappa(B),\kappa(C)\}\le \omega .
$
	For each elimination stage $1\le k\le n-1$ set
	\[
	\rho_{n,k}(A):=
	\frac{\max_{i,j}|a_{ij}^{(k)}|}{\max_{i,j}|a_{ij}|}.
	\]
	Then Gaussian elimination without pivoting is well defined and
	\begin{equation}\label{eq:stage-bound}
		\frac1\omega\le \rho_{n,k}(A)
		\le \frac{2(1+\omega^2)}{(1+\omega)^2},
		\qquad 1\le k\le n-1 .
	\end{equation}
	Consequently the  growth factor \eqref{eq:post-growth} satisfies
	\begin{equation}\label{eq:main-bound}
		\frac1\omega\le \rho_n(A)
		\le \frac{2(1+\omega^2)}{(1+\omega)^2}.
	\end{equation}
\end{thm}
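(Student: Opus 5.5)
The plan is to reduce everything to a scalar Schur-complement inequality for a single diagonal entry, then feed in the spectral information of $B$ and $C$.

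\emph{Reduction.} By Higham's Lemmas 2.1--2.3, all leading principal submatrices of $A$ are nonsingular, so elimination is well defined. Moreover, the active matrix $A^{(k)}=A_{22}-A_{21}A_{11}^{-1}A_{12}$ is the Schur complement of the leading $k\times k$ block, and it is again a Higham matrix. By Lemma 2.4 applied to both $A$ and $A^{(k)}$, the maximal moduli are attained on the diagonals. Hence
\[
\rho_{n,k}(A)=\max_j|s_j|\big/\max_i|a_{ii}|,
\]
where each $s_j=a^{(k)}_{jj}$ is the scalar Schur complement $s=a-x^TA_k^{-1}x$ of a $(k+1)\times(k+1)$ principal submatrix
\[
M=\begin{bmatrix}A_k&x\\ x^T&a\end{bmatrix}\in\HH_{k+1}.
\]
Principal submatrices of $B$ and $C$ have spectra inside $[\lambda_{\min}(B),\lambda_{\max}(B)]$ and $[\lambda_{\min}(C),\lambda_{\max}(C)]$. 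Thus it suffices to prove a statement about $s$ in terms of $b_-:=\lambda_{\min}(B)$, $b_+:=\lambda_{\max}(B)$, $c_-:=\lambda_{\min}(C)$, $c_+:=\lambda_{\max}(C)$, with $b_+\le\omega b_-$ and $c_+\le\omega c_-$.

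\emph{Locating $s$ in the plane.} Use $1/s=e^TM^{-1}e$. Put $v=M^{-1}e$. By complex symmetry,
\[
e^TM^{-1}e=v^TMv=v^TBv+i\,v^TCv .
\]
Alternatively, I would work with the Hermitian forms $w^*Bw,\ w^*Cw$ for $w=\bar v$-type vectors, aiming for a description of $s$ by a pair of real quadratic forms. The goal is a \emph{two-dimensional domination}: the point $(\Ree s,\Imm s)$ lies in a planar region $R$ determined only by $b_\pm,c_\pm$. I expect $R$ to be contained in the rectangle $[b_-,b_+]\times[c_-,c_+]$. The lower edges would come from the monotonicity facts
\[
\Ree(M/A_k)\ge B/B_k\ge b_-,\qquad \Imm(M/A_k)\ge C/C_k\ge c_-.
\]
The upper edges would come from showing that $\Ree(x^TA_k^{-1}x)$ and $\Imm(x^TA_k^{-1}x)$ cannot be too negative relative to the diagonal. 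Crucially, both one-sided bounds must be established before any squaring; this is exactly what Gap 1 warns about. Meanwhile $a=a_{jj}$ satisfies $\Ree a\ge b_-$ and $\Imm a\ge c_-$, so
\[
\max_i|a_{ii}|\ge\sqrt{b_-^2+c_-^2}\quad\text{and also}\quad \max_i|a_{ii}|\ge|a| .
\]

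\emph{Optimization.} With $R$ in hand, the upper bound in \eqref{eq:stage-bound} becomes the two-variable extremal problem of maximizing $|s|/|a|$ over admissible configurations. After scaling $b_-=c_-=1$, I expect the worst case to have one coordinate at its minimum and the other at its maximum in the Schur complement, with the diagonal pinned at the opposite corner. This should produce the ratio $2(1+\omega^2)/(1+\omega)^2$; I would verify the formula by the $2\times2$ regularized extremal family, which should also show sharpness. The lower bound $1/\omega$ is easier. Since $|s|\ge|\Ree s|\ge b_-$, and similarly in the imaginary part, while
\[
\max_i|a_{ii}|\le\sqrt{b_+^2+c_+^2}\le\omega\sqrt{b_-^2+c_-^2}\le\omega|s|,
\]
we get $\rho_{n,k}(A)\ge1/\omega$. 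The diagonal example from Gap 3 shows this is sharp.

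The main obstacle is the domination step: proving two-sided control of $x^TA_k^{-1}x$, in particular the upper bounds on $\Ree s$ and $\Imm s$ in terms of $b_+,c_+$ and the actual diagonal entry $a$. My first attempt would be to write $A_k^{-1}$ via the real block form $\begin{bmatrix}B_k&-C_k\\ C_k&B_k\end{bmatrix}$ and bound the resulting real quadratic form by simultaneous diagonalization of $B_k$ and $C_k$ (possible since both are real symmetric). This reduces the problem to a sum of $2\times2$ scalar pieces, where the planar extremum can be computed explicitly.
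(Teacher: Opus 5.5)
\textbf{The upper bound, which is the substance of the theorem, is not proved, and the region you expect for $s$ does not exist.}

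Your reduction is correct and matches the paper: well-posedness, heredity of $\HH$ under Schur complements, diagonal maximality, passage to the scalar Schur complement of the block on $I_k\cup\{j\}$, and interlacing. Your lower bound is also correct and is the paper's argument; the inequality $\Ree(M/A_k)\ge B/B_k\ge\lambda_{\min}(B)$ you invoke is Lemma~\ref{lem:schur-loewner-lower}.

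For the upper bound you leave the domination step as an expectation, and that expectation is false. The Schur complement need not lie in $[b_-,b_+]\times[c_-,c_+]$. Take
\[
A=\begin{bmatrix}1+i&i/2\\ i/2&1+i\end{bmatrix},\qquad B=I,\qquad C=\begin{bmatrix}1&1/2\\ 1/2&1\end{bmatrix}.
\]
Then $s=(1+i)+\tfrac14(1+i)^{-1}=\tfrac98+\tfrac78 i$, so $\Ree s>\lambda_{\max}(B)=1$.

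More fundamentally, a region depending only on $b_\pm,c_\pm$, compared with your lower bound $\max_i|a_{ii}|\ge\sqrt{b_-^2+c_-^2}$, cannot give the right constant. Consider the extremal matrix $A_\omega^+$ (diagonal $1+i$, off-diagonal $r(1-i)$, $r=\frac{\omega-1}{\omega+1}$). It has $b_\pm=c_\pm=1\pm r$ and $|s|/\sqrt{b_-^2+c_-^2}=\frac{1+\omega^2}{1+\omega}$, which grows like $\omega$. The estimate has to be relative to the actual entry $a=\beta+i\gamma$. Your picture of the worst case is also wrong. In that example $q:=x^TA_k^{-1}x=-r^2a$ is antiparallel to $a$, and $s=(1+r^2)a$ sits at no corner of the rectangle.

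You are missing two ingredients.

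\textbf{1. The Wielandt--Kantorovich block inequality.} Apply Lemma~\ref{lem:kantorovich-block} to the bordered real and imaginary parts. Writing $x=u+iv$, $P=\Ree A_k$, $Q=\Imm A_k$, it gives $u^TP^{-1}u\le r^2\beta$ and $v^TQ^{-1}v\le r^2\gamma$. This is the only place $\omega$ enters the upper bound, and it is what ties $q$ to $a$. After your simultaneous diagonalization $R^TPR=I$, $R^TQR=\diag(d_j)$, $R^Tx=\xi+i\eta$, you get
\[
q=\sum_j\frac{(\xi_j+i\eta_j)^2}{1+id_j},\qquad \sum_j\xi_j^2\le r^2\beta,\qquad \sum_j\frac{\eta_j^2}{d_j}\le r^2\gamma .
\]

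\textbf{2. A bound on $|q|$ in the direction of $q$.} Under these constraints alone you cannot bound $\Ree q$ and $\Imm q$ separately and then recombine. Already with $\beta=\gamma=1$ and a single term, $|\Ree q|$ and $|\Imm q|$ can each exceed $1.33\,r^2$: take $d=2$, $(\xi,\eta)=(-r,\sqrt2\,r)$, respectively $d=\tfrac12$, $(\xi,\eta)=(r,-r/\sqrt2)$. Pythagoras therefore cannot return $r^2|a|=\sqrt2\,r^2$.

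The paper instead bounds $|q|=\Ree(e^{-i\phi}q)$ with $\phi=\arg q$, using the phase-dependent $2\times2$ domination of Lemma~\ref{lem:twodim}:
\[
\Ree\bigl(e^{-i\phi}(\xi+i\eta)^2/(1+id)\bigr)\le\cos\vartheta(\phi)\,\xi^2+\sin\vartheta(\phi)\,\eta^2/d\qquad\text{for all } d>0 .
\]
Summing over $j$ gives $|q|\le r^2(\beta\cos\vartheta+\gamma\sin\vartheta)\le r^2|a|$. The triangle inequality then yields
\[
|s|\le(1+r^2)|a|=\frac{2(1+\omega^2)}{(1+\omega)^2}|a|\le\frac{2(1+\omega^2)}{(1+\omega)^2}\max_i|a_{ii}| ,
\]
with no extremal optimization over configurations needed.
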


Both constants in Theorem~\ref{thm:main} are sharp in the following sense.
 The lower
constant is already forced by diagonal matrices, while the upper constant is
attained by a two-dimensional extremal family.
\begin{thm}\label{thm:sharp}
	Let $1\le\omega<\infty$.
	\begin{enumerate}[label=\textup{(\roman*)}]
		\item For every $n\ge2$, the lower constant $1/\omega$ in \eqref{eq:stage-bound} is attained at every stage $1\le k\le n-1$ by a diagonal Higham matrix satisfying
		\[
		\kappa(B)=\kappa(C)=\omega .
		\]
		Thus no larger universal lower bound is possible, even under the exact condition-number requirement.
		\item There is a $2\times2$ Higham matrix $A_\omega=B_\omega+i C_\omega$ such that
		\[
		\kappa(B_\omega)=\kappa(C_\omega)=\omega
		\quad\text{and}\quad
		\rho_2(A_\omega)=\frac{2(1+\omega^2)}{(1+\omega)^2} .
		\]
		Consequently the upper constant in \eqref{eq:main-bound} cannot be decreased.
	\end{enumerate}
\end{thm}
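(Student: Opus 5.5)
The plan is to verify both parts by explicit construction. Part (i) uses a diagonal matrix, where nothing happens during elimination. Part (ii) uses a symmetric $2\times2$ family in which $B$ and $C$ share eigenvectors, chosen so that the single Schur complement is a real multiple of $1+i$.

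For (i), I take $A=(1+i)\diag(\omega,1,\ldots,1)$, so $B=C=\diag(\omega,1,\ldots,1)$. These are real symmetric positive definite with $\kappa(B)=\kappa(C)=\omega$, so $A\in\HH_n$ for every $n\ge2$. Because $A$ is diagonal, every elimination multiplier vanishes. Hence after $k$ steps ($1\le k\le n-1$) the active matrix is $A^{(k)}=(1+i)I_{n-k}$, whose largest entry in modulus is $\sqrt2$. The largest entry of $A$ is $|a_{11}|=\sqrt2\,\omega$, so $\rho_{n,k}(A)=1/\omega$ at every stage. This shows that the lower bound in \eqref{eq:stage-bound} is attained under the exact condition-number requirement. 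When $\omega=1$ this is simply $(1+i)I$.

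For (ii), I set $x=(\omega-1)/(\omega+1)\in[0,1)$ and take
\[
B_\omega=\begin{bmatrix}1&x\\ x&1\end{bmatrix},\qquad
C_\omega=\begin{bmatrix}1&-x\\ -x&1\end{bmatrix}.
\]
Both have eigenvalues $1\pm x>0$, so they are real symmetric positive definite. Their condition number is $(1+x)/(1-x)=\omega$. Then
\[
A_\omega=\begin{bmatrix}1+i& x(1-i)\\ x(1-i)&1+i\end{bmatrix}.
\]
The only stage is $k=1$, and its active entry is $a_{22}-a_{12}^2/a_{11}$. Using $(1-i)^2=-2i$ and $-2i/(1+i)=-(1+i)$, we get $a_{12}^2/a_{11}=-x^2(1+i)$. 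Hence
\[
a^{(1)}_{11}=(1+i)(1+x^2),\qquad |a^{(1)}_{11}|=\sqrt2\,(1+x^2).
\]
The entries of $A_\omega$ have moduli $\sqrt2$ on the diagonal and $\sqrt2\,x<\sqrt2$ off it, so the maximum is $\sqrt2$. Therefore
\[
\rho_2(A_\omega)=1+x^2=1+\Bigl(\frac{\omega-1}{\omega+1}\Bigr)^2=\frac{2(1+\omega^2)}{(1+\omega)^2}.
\]
This matches the upper bound in \eqref{eq:main-bound}, so that constant cannot be decreased.

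The main obstacle is finding the extremal family in (ii); once it is found, the verification is the routine arithmetic above. The choice is guided by two requirements. First, $B$ and $C$ should be simultaneously diagonalizable by the Hadamard basis, which makes both condition numbers exactly $\omega$. Second, the off-diagonal entries should be tilted by opposite signs in $B$ and $C$, so that $a_{12}$ is proportional to $1-i$. Then $-a_{12}^2/a_{11}$ is a positive multiple of $1+i$ and adds in phase to $a_{22}$, and the growth is governed exactly by $1+x^2$. Since $x<1$ for every finite $\omega$, the off-diagonal moduli stay strictly below the diagonal ones, which is why the denominator is $\sqrt2$.
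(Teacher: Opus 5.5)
Your proposal is correct and coincides with the paper's proof. It uses the same diagonal matrix $(1+i)\diag(\omega,1,\ldots,1)$ for part (i) and the same $2\times2$ matrix with off-diagonal entries $\frac{\omega-1}{\omega+1}(1-i)$ for part (ii), verified by the same eigenvalue and Schur-complement computations.
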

We next record the corresponding result for the larger accretive-dissipative
class.  Since the diagonal maximality property is no longer available in
\(\AD_n\), the passage from diagonal Schur-complement bounds to full entrywise
growth costs the standard factor \(\sqrt2\).  The lower bound, however, remains
sharp.
\begin{thm}\label{thm:generalized}
	Let $A=B+i C\in\AD_n$ and suppose that
	$
	\max\{\kappa(B),\kappa(C)\}\le\omega .
	$
	Then 
	\begin{equation*}
		\frac1\omega
		\le \rho_n(A)
		\le \frac{2\sqrt2(1+\omega^2)}{(1+\omega)^2}.
	\end{equation*}
	The lower constant $1/\omega$ is sharp, even for diagonal Higham matrices.  In particular,  $\rho_n(A)< 2\sqrt2$.
\end{thm}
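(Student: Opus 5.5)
We reduce everything to scalar diagonal Schur complements, as in the proof of Theorem~\ref{thm:main}, and then pay a factor \(\sqrt2\) to pass from diagonal entries to arbitrary entries. For the active matrix \(A^{(k)}\) we use the standard representation \(A^{(k)}=A_{22}-A_{21}A_{11}^{-1}A_{12}\), where \(A_{11}\) is the leading \(k\times k\) block. The class \(\AD_n\) is closed under taking principal submatrices and Schur complements, because
\[
\Ree(A/A_{11})\ge \Ree A_{22}-\Ree(A_{21}A_{11}^{-1}A_{12})
\]
is positive definite, and similarly for the imaginary part. Hence \(A^{(k)}\in\AD_{n-k}\), elimination is well defined, and every entry of \(A^{(k)}\) is a scalar Schur complement of a principal submatrix of \(A\) of size \(k+1\) or \(k+2\).

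\textbf{Lower bound.} Fix \(k\). A diagonal entry \(a^{(k)}_{jj}=a_{jj}-\beta^*A_{11}^{-1}\alpha\) is the Schur complement of a \((k+1)\times(k+1)\) principal submatrix. We then invoke the sharp accretive-dissipative scalar Schur-complement inequality, which the abstract says the paper proves alongside the Higham case. In its lower direction it should give
\[
\Ree a^{(k)}_{jj}\ge \lambda_{\min}(B)\quad\text{and}\quad \Imm a^{(k)}_{jj}\ge\lambda_{\min}(C).
\]
Independently of that result, this follows from the interlacing \(\lambda_{\min}(B/B_{11})\ge\lambda_{\min}(B)\) combined with \(\Ree(A/A_{11})\ge B/B_{11}\), which holds by the Hermitian-part Schur-complement inequality for accretive matrices.

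It follows that \(|a^{(k)}_{jj}|\ge\sqrt{\lambda_{\min}(B)^2+\lambda_{\min}(C)^2}\). On the other side, every entry satisfies
\[
|a_{pq}|\le|b_{pq}|+|c_{pq}|\le\lambda_{\max}(B)+\lambda_{\max}(C).
\]
This is slightly too weak by itself, so instead we compare with a diagonal entry. Since \(|a_{pq}|\le\sqrt{\lambda_{\max}(B)^2+\lambda_{\max}(C)^2}\) fails in general (Gap 2), we use the bound
\[
|a_{pq}|\le\max(\lambda_{\max}(B),\lambda_{\max}(C))\cdot\sqrt2
\]
and compare it with \(|a^{(k)}_{jj}|\ge\sqrt2\,\min(\lambda_{\min}(B),\lambda_{\min}(C))\). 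This yields \(\rho_n\ge \min\lambda_{\min}/\max\lambda_{\max}\), which is only \(\ge1/\omega^2\) in general, so a sharper pairing is required. The plan is to use the scalar inequality in the normalized form \(|a^{(k)}_{jj}|\ge |a_{jj}|/\omega\), valid for all \(j\). This comes from \(\Ree a^{(k)}_{jj}\ge\lambda_{\min}(B)\ge b_{jj}/\omega\) and \(\Imm a^{(k)}_{jj}\ge c_{jj}/\omega\). We then need \(\max_{p,q}|a_{pq}|\) to be controlled by some diagonal \(|a_{jj}|\), with \(j\) chosen among the indices still active. Getting this last comparison to work for the remaining block is the step I expect to be the main obstacle. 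If it fails for general \(\AD_n\), I would retreat to the statement actually needed, namely a lower bound exhibiting \(1/\omega\) via the normalized quantities. Sharpness of \(1/\omega\) is immediate from Theorem~\ref{thm:sharp}(i), since \(\HH_n\subset\AD_n\).

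\textbf{Upper bound.} Here the sharp scalar accretive-dissipative Schur-complement estimate gives
\[
|a^{(k)}_{jj}|\le \frac{2(1+\omega^2)}{(1+\omega)^2}\max_i|a_{ii}|.
\]
The proof mirrors the two-dimensional domination used for Theorem~\ref{thm:main}, with \(\Ree q\) and \(\Imm q\) bounded on both sides, which avoids Gap 1. For off-diagonal active entries we use the accretive-dissipative bound
\[
|x_{pq}|\le\sqrt2\max(|x_{pp}|,|x_{qq}|),
\]
valid for any \(X\in\AD_m\). To prove it, write \(x_{pq}=b_{pq}+ic_{pq}\), so that
\[
|x_{pq}|\le \sqrt{b_{pp}b_{qq}}+\sqrt{c_{pp}c_{qq}}\le\sqrt{(b_{pp}+c_{pp})(b_{qq}+c_{qq})},
\]
and use \(b+c\le\sqrt2|b+ic|\). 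Applying this to \(X=A^{(k)}\) and dividing by \(\max|a_{pq}|\ge\max|a_{ii}|\) gives the stated upper bound. Finally, \(2(1+\omega^2)/(1+\omega)^2<2\) for every finite \(\omega\), which yields \(\rho_n(A)<2\sqrt2\).
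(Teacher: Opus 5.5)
\textbf{The lower bound is not proved; the upper bound is fine.}

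Your upper bound matches the paper's argument. You use Corollary~\ref{cor:ad-diagonal} for the active diagonal entries and then the factor $\sqrt2$ of Lemma~\ref{lem:ad-entry}, which you reprove correctly.

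For the lower bound, you correctly get $|a^{(k)}_{jj}|\ge\sqrt{m_B^2+m_C^2}$, where $m_B=\lambda_{\min}(B)$ and $m_C=\lambda_{\min}(C)$. Then you drop the natural pairing because you claim that $|a_{pq}|\le\sqrt{M_B^2+M_C^2}$ fails in general by Gap~2. That claim is false. Gap~2 compares an off-diagonal entry with the \emph{diagonal entries} $|a_{pp}|$, not with the extreme eigenvalues.

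\textbf{The missing step.} Let $H$ be Hermitian positive definite with spectrum in $[m,M]$, and let $p\ne q$. Then $h_{pq}$ is also the $(p,q)$ entry of $H-\frac{M+m}{2}I$. That matrix has spectral norm at most $(M-m)/2$, so $|h_{pq}|\le (M-m)/2$. Using $M_B\le\omega m_B$ and $M_C\le\omega m_C$, this gives
\[
|a_{pq}|\le|b_{pq}|+|c_{pq}|\le\frac{(\omega-1)(m_B+m_C)}{2}\le\omega\sqrt{m_B^2+m_C^2}\qquad(p\ne q).
\]
For diagonal entries, $|a_{pp}|\le\sqrt{M_B^2+M_C^2}\le\omega\sqrt{m_B^2+m_C^2}$. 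Hence $\max_{i,j}|a_{ij}|\le\omega\sqrt{m_B^2+m_C^2}$. Together with your active-diagonal lower bound, this gives $\rho_{n,k}(A)\ge1/\omega$ for every $k$. This is exactly how the paper argues, and it is the step your proposal lacks.

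\textbf{Why your alternative route fails.} The normalized inequality $|a^{(k)}_{jj}|\ge|a_{jj}|/\omega$ is true. To reach $1/\omega$ from it, you would need $\max_{p,q}|a_{pq}|\le\max_{j>k}|a_{jj}|$. This already fails for $A=(1+i)\diag(\omega,1,\ldots,1)$: the largest entry sits at the eliminated index, and your route then yields only $1/\omega^2$. Both sides have to be compared with the global quantity $\sqrt{m_B^2+m_C^2}$, not with individual diagonal entries.

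Your fallback of ``retreating'' to a lower bound in normalized quantities would change the theorem. The quantity $\rho_n$ is normalized by $\max_{i,j}|a_{ij}|$, so that weaker statement is not what is claimed.

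A smaller point: your remark that $\min\lambda_{\min}/\max\lambda_{\max}\ge1/\omega^2$ is also false, since $B$ and $C$ can have unrelated scales. For example, $B=I$ and $C=tI$ with $t>1$ give $\omega=1$ but a ratio of $1/t$. This path was discarded anyway.
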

The factor \(\sqrt2\) in the upper bound of Theorem~\ref{thm:generalized}
comes from a general diagonal-to-entry estimate for accretive-dissipative
matrices.  Numerical experiments suggest that this factor may be an artifact
of the reduction rather than a genuine obstruction.  This leads to the following
conjecture.
\begin{conj}\label{conj:me}
		Let $A=B+i C\in\AD_n$ and suppose that
	$
	\max\{\kappa(B),\kappa(C)\}\le\omega .
	$
	Then $$\rho_n(A)
	\le \frac{2(1+\omega^2)}{(1+\omega)^2},$$ where the constant $\frac{2(1+\omega^2)}{(1+\omega)^2}$  is sharp.
\end{conj} 
The sharpness of the constant in Conjecture~\ref{conj:me} follows from Theorem~\ref{thm:sharp}(ii), since the
extremal Higham matrices constructed there belong to the subclass
\(\HH_n\subset\AD_n\).
\subsection*{Sketch of the proofs.}

The key estimate is local: for a principal block
\[
        \begin{bmatrix}G&z\\ z^T&\alpha\end{bmatrix}
\]
of a Higham matrix whose real and imaginary parts both have condition number at most \(\omega\), we prove
\[
       |z^TG^{-1}z|\le \left(\frac{\omega-1}{\omega+1}\right)^2|\alpha|.
\]
Consequently the scalar Schur complement $\sigma=\alpha-z^TG^{-1}z$ satisfies the sharp two-sided estimate
\[
       \frac{4\omega}{(1+\omega)^2}|\alpha|
       \le |\sigma|
       \le \frac{2(1+\omega^2)}{(1+\omega)^2}|\alpha|.
\]
The proof of this scalar inequality is the key part of the paper.  It rests on a two-dimensional domination lemma that converts the phase of the scalar update into a positive semidefinite $2\times2$ majorization.  Once this local estimate is available, the global growth-factor bound follows from Higham's hereditary and diagonal-maximality lemmas.

\subsection*{Organization of this paper.}
The paper is organized as follows.  Section~\ref{sec:prelim} recalls the standard structural and Kantorovich facts.  Section~\ref{sec:twodim} proves the key two-dimensional domination lemma and derives the sharp scalar Schur-complement bounds.  Section~\ref{sec:main-proof} proves Theorems~\ref{thm:main} and \ref{thm:sharp}.  Section~\ref{sec:ad} records the corresponding scalar result for accretive-dissipative matrices, proves Theorem~\ref{thm:generalized} and explains why full entrywise growth there loses a factor $\sqrt2$.

\section{Preliminaries}\label{sec:prelim}

\subsection{The Wielandt--Kantorovich block estimate}
The following Wielandt--Kantorovich block estimate is the only place where the condition number enters.

\begin{lem}\label{lem:kantorovich-block}
Let
$
      H=\begin{bmatrix}H_{11}&h\\ h^*&\eta\end{bmatrix}\in \mathbb{C}^{n\times n}
$ be Hermitian positive definite
and  $\kappa(H)\le\omega$.  Then
\begin{equation*}
      h^*H_{11}^{-1}h\le \left(\frac{\omega-1}{\omega+1}\right)^2\eta,
\end{equation*}
where the constant $\left(\frac{\omega-1}{\omega+1}\right)^2$ is sharp.
\end{lem}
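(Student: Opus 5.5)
Our plan is to derive the estimate from the Wielandt inequality. Let $\lambda_1\ge\cdots\ge\lambda_n>0$ be the eigenvalues of $H$, and set $\gamma:=\frac{\omega-1}{\omega+1}$. Since $\kappa(H)=\lambda_1/\lambda_n\le\omega$ and $t\mapsto\frac{t-1}{t+1}$ is increasing, the Wielandt inequality applies: for orthogonal vectors $x\perp y$,
\[
|x^*Hy|^2\le \gamma^2\,(x^*Hx)(y^*Hy).
\]

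Apply this with $y=e_n$ and $x=\begin{bmatrix}u\\0\end{bmatrix}$, where $u\in\mathbb C^{n-1}$ is arbitrary. These vectors are orthogonal, and $x^*Hy=u^*h$, $x^*Hx=u^*H_{11}u$, $y^*Hy=\eta$. This gives
\[
|u^*h|^2\le\gamma^2\,\eta\,(u^*H_{11}u)\qquad\text{for all }u.
\]
Now choose $u=H_{11}^{-1}h$. This is legitimate because $H_{11}$ is positive definite as a principal submatrix of $H$. The left side becomes $(h^*H_{11}^{-1}h)^2$ and the right side becomes $\gamma^2\eta\,h^*H_{11}^{-1}h$. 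If $h\neq0$, dividing by $h^*H_{11}^{-1}h>0$ yields the claim; if $h=0$ the claim is trivial. Equivalently, one can note that $\sup_u |u^*h|^2/(u^*H_{11}u)=h^*H_{11}^{-1}h$ by Cauchy--Schwarz in the $H_{11}$-inner product.

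If we do not want to cite Wielandt as known, we prove it first. By homogeneity, reduce to unit vectors. Consider the $2\times2$ compression $K=[x\ y]^*H[x\ y]$ with $x,y$ orthonormal. Cauchy interlacing puts the eigenvalues $\mu_1\ge\mu_2$ of $K$ in $[\lambda_n,\lambda_1]$, so $\mu_1/\mu_2\le\omega$. For a $2\times2$ positive definite matrix $\begin{bmatrix}a&b\\ \bar b&c\end{bmatrix}$, we need $|b|^2\le\left(\frac{\mu_1-\mu_2}{\mu_1+\mu_2}\right)^2ac$. This follows from $(a+c)^2-(a-c)^2=4ac$ together with $(a-c)^2+4|b|^2=(\mu_1-\mu_2)^2$ and $a+c=\mu_1+\mu_2$: writing $s=\mu_1+\mu_2$ and $d=\mu_1-\mu_2$, we get $4ac=s^2-d^2+4|b|^2$, and then $|b|^2s^2\le (d^2/4)(s^2-d^2+4|b|^2)$ rearranges to $(s^2-d^2)(|b|^2-d^2/4)\le0$, which holds since $|b|\le d/2$. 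This $2\times2$ algebra is the only delicate step.

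For sharpness, take $n=2$ and $H=\begin{bmatrix}1&\gamma\\ \gamma&1\end{bmatrix}$. Its eigenvalues are $1\pm\gamma$, with ratio $\frac{1+\gamma}{1-\gamma}=\omega$, so $\kappa(H)=\omega$. Here $h^*H_{11}^{-1}h=\gamma^2=\gamma^2\eta$, so equality holds. For larger $n$, take the direct sum of this block with a suitable identity, placed so that the coupling involves the last index.
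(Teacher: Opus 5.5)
Your proposal is correct and follows the paper's route. The paper proves the lemma only by citing the trailing-scalar block case of the Wielandt--Kantorovich inequality, with the same $2\times2$ extremal matrix for sharpness; you supply the argument it omits. You deduce the block form from the vector Wielandt inequality via $x=(u,0)$, $y=e_n$, $u=H_{11}^{-1}h$, prove Wielandt itself by the $2\times2$ compression, and extend sharpness to every $n$ by adjoining an identity block.
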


\begin{proof}
The block form is the scalar trailing-block case of the Wielandt--Kantorovich inequality in Zhang \cite[p.~277, Eq.~(6)]{Zhang2001}; the same inequality is also recorded explicitly in \cite[Section~2, Lemma~4]{XueHu2015}.  Sharpness is already attained by $\begin{bmatrix}1&\frac{\omega-1}{\omega+1}\\ \frac{\omega-1}{\omega+1}&1\end{bmatrix}$.
\end{proof}

\subsection{Structural facts for Higham matrices}
We need two hereditary properties and one diagonal maximality property. 
\begin{lem}[{\cite[Lemmas~2.1--2.3]{Higham1998}}]\label{lem:inverse-schur}
Let $G\in\HH_n$. Then $G$ is nonsingular, every principal submatrix of $G$ and every Schur complement of a nonsingular principal block of $G$ again belongs to the corresponding Higham class.
\end{lem}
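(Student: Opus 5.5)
The statement is Lemma~\ref{lem:inverse-schur}, and I would prove it directly from the definition $G=B+iC$ with $B,C$ real symmetric positive definite.

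\emph{Nonsingularity.} Suppose $Gx=0$ for some $x\in\mathbb C^n$. Then
\[
x^*Gx=x^*Bx+i\,x^*Cx=0 .
\]
Since $B$ and $C$ are Hermitian, both $x^*Bx$ and $x^*Cx$ are real. Hence each of them vanishes, and positive definiteness of $B$ forces $x=0$.

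\emph{Principal submatrices.} A principal submatrix $G[\alpha]$ equals $B[\alpha]+iC[\alpha]$. Principal submatrices of real symmetric positive definite matrices are again real symmetric positive definite, so $G[\alpha]\in\HH_{|\alpha|}$. In particular, every principal block of $G$ is nonsingular by the first step.

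\emph{Schur complements.} This is the main step. Write
\[
G=\begin{bmatrix}G_{11}&G_{12}\\ G_{21}&G_{22}\end{bmatrix}.
\]
Complex symmetry gives $G_{21}=G_{12}^T$. The Schur complement $S=G_{22}-G_{12}^TG_{11}^{-1}G_{12}$ is then complex symmetric, because $G_{11}^{-1}$ is symmetric. So $\Ree S$ and $\Imm S$ are automatically real symmetric, and what must be shown is that both are positive definite.

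I would first treat the inverse. For $G\in\HH_n$, I would show that $-iG^{-1}$ is again a Higham matrix, i.e.\ that $\Imm G^{-1}$ is negative definite and $\Ree G^{-1}$ is positive definite. For any $y\neq 0$, set $x=G^{-1}y$. Then
\[
y^*G^{-1}y=x^*G^*x=x^*Bx-i\,x^*Cx .
\]
Taking Hermitian parts gives $\Ree(G^{-1})\succ0$ and $\Imm(G^{-1})\prec0$, where for the complex symmetric matrix $G^{-1}$ these are just the entrywise real and imaginary parts. Hence $-iG^{-1}\in\HH_n$, and symmetrically $iG^{-1}$ has positive definite real part and negative definite imaginary part.

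Now use the identity that $S^{-1}$ is the trailing principal block of $G^{-1}$. That block is a principal submatrix of $G^{-1}$, so it has positive definite real part and negative definite imaginary part by the previous paragraph. Applying the same inverse argument once more, now to $S^{-1}$, shows that
\[
S=(S^{-1})^{-1}
\]
has positive definite real and imaginary parts. To see this, apply the inverse computation to $H=S^{-1}$, whose Hermitian part is $\succ0$ and whose imaginary part is $\prec0$: for $y\neq0$ and $x=H^{-1}y$ one gets $y^*H^{-1}y=\overline{x^*Hx}$, whose real part is positive and whose imaginary part is positive. Hence $S\in\HH$.

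The only delicate point is the sign bookkeeping in the inversion step. Everything else is inherited from positive definiteness of $B$ and $C$ together with symmetry, and no condition-number input is needed.
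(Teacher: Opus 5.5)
Your argument is correct in its main line, but one side remark gets the sign wrong at exactly the point you flag as delicate. What you actually prove is $\Ree G^{-1}>0$ and $\Imm G^{-1}<0$, i.e.\ $G^{-1}=X-iY$ with $X,Y>0$. Hence $iG^{-1}=Y+iX\in\HH_n$, while $-iG^{-1}=-Y-iX$ has both parts negative definite. So the claim ``$-iG^{-1}\in\HH_n$'' is false. So is the claim that $iG^{-1}$ has negative definite imaginary part, since its imaginary part is $X$.

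Nothing downstream uses these claims, so the proof survives. With the correct sign, the last step even becomes a one-liner, because $K\mapsto iK^{-1}$ maps $\HH$ to itself and is an involution. Indeed, $iS^{-1}$ is a principal submatrix of $iG^{-1}\in\HH_n$, hence Higham, so $S=i\,(iS^{-1})^{-1}\in\HH$.

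Two small points should also be made explicit. First, $S$ is nonsingular (from $\det G=\det G_{11}\det S$), which you need before invoking $(G^{-1})_{22}=S^{-1}$. Second, a non-leading principal block can be moved to the leading position by a permutation congruence $\Pi^TG\Pi$, which preserves $\HH_n$.

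The paper gives no proof of this lemma and simply cites Higham. However, its own Lemma~\ref{lem:schur-loewner-lower} contains a different route to the Schur-complement part. Substituting $w=\bigl[\begin{smallmatrix}y\\ x\end{smallmatrix}\bigr]$ with $y=-A_{11}^{-1}A_{12}x$ gives $w^*Aw=x^*Sx$. Taking real and imaginary parts and minimizing over the first block then yields $\Ree S\ge B/B_{11}>0$ and $\Imm S\ge C/C_{11}>0$ in one stroke. That argument needs neither the block-inverse identity nor prior knowledge that $S$ is invertible. It also produces a quantitative Loewner bound, which is exactly what the paper later uses for the lower bound $1/\omega$ in Theorem~\ref{thm:main}.

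Your double-inversion route gives only membership in $\HH$. As a by-product, though, it proves the inverse statement $iG^{-1}\in\HH_n$, which is the Higham case of Lemma~\ref{prop:ad-inverse-schur} and which the paper only cites. It also extends verbatim to $\AD_n$ once real and imaginary parts are read as $(S+S^*)/2$ and $(S-S^*)/(2i)$.
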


\begin{lem}[{\cite[Lemma~2.4]{Higham1998}}]\label{lem:diagonal-max}
If $G=(g_{ij})=P+i Q\in\HH_n$, then for every $i\ne j$,
\begin{equation*}
        |g_{ij}|^2<|g_{ii}|\,|g_{jj}|.
\end{equation*}
In particular,
\[
        \max_{i,j}|g_{ij}|=\max_i |g_{ii}|.
\]
\end{lem}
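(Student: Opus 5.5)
We restrict attention to the $2\times 2$ principal submatrix of $G$ in rows and columns $i,j$. This reduces the claim to a scalar inequality for $2\times2$ real symmetric positive definite matrices, followed by one application of Cauchy--Schwarz.

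First, since $P$ and $Q$ are real symmetric positive definite, their principal $2\times 2$ submatrices
\[
\begin{bmatrix}p_{ii}&p_{ij}\\ p_{ij}&p_{jj}\end{bmatrix},\qquad
\begin{bmatrix}q_{ii}&q_{ij}\\ q_{ij}&q_{jj}\end{bmatrix}
\]
are again positive definite. Hence $p_{ii},p_{jj},q_{ii},q_{jj}>0$. Positivity of the determinants gives $p_{ij}^2<p_{ii}p_{jj}$ and $q_{ij}^2<q_{ii}q_{jj}$.

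Second, the entries $p_{ij},q_{ij}$ are real, so $g_{ij}=p_{ij}+i q_{ij}$ has real part $p_{ij}$ and imaginary part $q_{ij}$. This is the point where membership in $\HH_n$ rather than $\AD_n$ is used; compare Gap~2. Therefore
\[
|g_{ij}|^2=p_{ij}^2+q_{ij}^2<p_{ii}p_{jj}+q_{ii}q_{jj}.
\]

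Third, apply the Cauchy--Schwarz inequality in $\mathbb R^2$ to the vectors $(p_{ii},q_{ii})$ and $(p_{jj},q_{jj})$:
\[
p_{ii}p_{jj}+q_{ii}q_{jj}\le \sqrt{p_{ii}^2+q_{ii}^2}\,\sqrt{p_{jj}^2+q_{jj}^2}=|g_{ii}|\,|g_{jj}|.
\]
Combining the last two displays yields the strict inequality $|g_{ij}|^2<|g_{ii}|\,|g_{jj}|$.

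For the ``in particular'' statement, the geometric mean satisfies $\sqrt{|g_{ii}||g_{jj}|}\le\max\{|g_{ii}|,|g_{jj}|\}$. Hence every off-diagonal modulus is strictly smaller than some diagonal modulus, and the maximum over all entries is attained on the diagonal. There is no real obstacle in this argument; the only point requiring care is the realness of $p_{ij}$ and $q_{ij}$ in the second step.
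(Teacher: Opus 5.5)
Your proof is correct. The strict bounds $p_{ij}^2<p_{ii}p_{jj}$ and $q_{ij}^2<q_{ii}q_{jj}$ from the $2\times2$ principal minors, the realness of $p_{ij},q_{ij}$, and Cauchy--Schwarz applied to $(p_{ii},q_{ii})$ and $(p_{jj},q_{jj})$ give $|g_{ij}|^2<|g_{ii}|\,|g_{jj}|$, and the ``in particular'' follows from the geometric-mean bound. The paper does not prove this lemma itself but cites Higham's Lemma~2.4, and your argument is the standard one behind that citation; you also correctly identify, in line with the paper's Gap~2, that the realness of the off-diagonal parts is exactly what fails in $\AD_n$.
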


We denote by \(\le\) ($<$) the usual Loewner order on Hermitian matrices. For a Hermitian matrix \(H\), we write \(\lambda_{\min}(H)\) and \(\lambda_{\max}(H)\) for its smallest and largest eigenvalues, respectively. We next establish a Loewner lower bound for the Schur complements of matrices in the generalized Higham class \(\AD_n\).
\begin{lem}\label{lem:schur-loewner-lower}
Let $A=B+i C\in\AD_n$ be partitioned as
\[
       A=\begin{bmatrix}A_{11}&A_{12}\\ A_{21}&A_{22}\end{bmatrix},
       \qquad
       B=\begin{bmatrix}B_{11}&B_{12}\\ B_{21}&B_{22}\end{bmatrix},
       \qquad
       C=\begin{bmatrix}C_{11}&C_{12}\\ C_{21}&C_{22}\end{bmatrix},
\]
where $A_{11}$ is a nonsingular principal block.  If
\[
       S=A/A_{11}=A_{22}-A_{21}A_{11}^{-1}A_{12}=R+i T
\]
is the corresponding Schur complement, with $R$ and $T$ Hermitian, then
\begin{equation*}
       R\ge B/B_{11},
       \qquad
       T\ge C/C_{11}.
\end{equation*}
Consequently, if $m_B=\lambda_{\min}(B)$ and $m_C=\lambda_{\min}(C)$, then
\[
       R\ge m_B I,
       \qquad
       T\ge m_C I .
\]
\end{lem}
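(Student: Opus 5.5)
The plan is to write the Schur complement $S=A/A_{11}$ explicitly in terms of the Hermitian blocks and compute its Hermitian part. Since $B$ and $C$ are Hermitian, we have $A_{21}=B_{21}+iC_{21}=B_{12}^*+iC_{12}^*$ and $A_{12}=B_{12}+iC_{12}$. We set $X=B_{12}$, $Y=C_{12}$, and $M=A_{11}^{-1}$. The Hermitian part of $A_{11}$ is $B_{11}>0$; its skew part is $iC_{11}$.

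The standard route goes through the Hermitian part of the inverse. We write $A_{11}^{-1}=(B_{11}+iC_{11})^{-1}$, and for a vector $v$ we set $u=A_{11}^{-1}v$. This gives
\[
\Ree\bigl(v^*A_{11}^{-1}v\bigr)=\Ree\bigl(u^*A_{11}^*u\bigr)=u^*B_{11}u .
\]
However, $A_{21}\neq A_{12}^*$ in general, so the quadratic-form trick does not apply directly. The cleaner approach is a variational characterization. For $x$ in the second block, we want to show $\Ree(x^*Sx)\ge x^*(B/B_{11})x$, and similarly $\Imm(x^*Sx)\ge x^*(C/C_{11})x$.

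The first step is to pick $y=-A_{11}^{-1}A_{12}x$ and form $w=(y,x)$. Then $Aw=(0,Sx)$, so $w^*Aw=x^*Sx$. Taking real parts gives
\[
\Ree(x^*Sx)=w^*Bw .
\]
Next we use the variational formula for the Hermitian positive definite Schur complement:
\[
\min_{y'}\ (y',x)^*B(y',x)=x^*(B/B_{11})x .
\]
Since $w^*Bw$ is one value of this quadratic form, it is at least the minimum, so $\Ree(x^*Sx)\ge x^*(B/B_{11})x$. Because $x^*Rx=\Ree(x^*Sx)$ when $R$ and $T$ are the Hermitian and skew-Hermitian-over-$i$ parts of $S$, this yields $R\ge B/B_{11}$. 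The same argument with imaginary parts gives $\Imm(x^*Sx)=w^*Cw\ge x^*(C/C_{11})x$, hence $T\ge C/C_{11}$.

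For the consequence, we use the eigenvalue interlacing and inverse formula for Schur complements. By the inverse block formula, $(B/B_{11})^{-1}$ is the trailing principal block of $B^{-1}$. Its largest eigenvalue is therefore at most $\lambda_{\max}(B^{-1})=1/m_B$, so $B/B_{11}\ge m_BI$. The same holds for $C$.

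The main point requiring care is the identity $w^*Aw=x^*Sx$. This follows because the first block of $Aw$ vanishes, which makes the cross terms disappear. The rest is routine.
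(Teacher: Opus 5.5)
Your proof is correct and follows the paper's argument essentially step for step: the test vector $y=-A_{11}^{-1}A_{12}x$ giving $w^*Aw=x^*Sx$, the variational characterization of $B/B_{11}$ and $C/C_{11}$, and the fact that $(B/B_{11})^{-1}$ is a principal submatrix of $B^{-1}\le m_B^{-1}I$. Your opening detour through the Hermitian part of $A_{11}^{-1}$ is never used and can be dropped.
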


\begin{proof}
Fix a vector $x$ of compatible size and put $y=-A_{11}^{-1}A_{12}x$.  Then
\[
       \begin{bmatrix}y\\x\end{bmatrix}^{\!*}
       A
       \begin{bmatrix}y\\x\end{bmatrix}
       =x^*Sx.
\]
Taking real parts gives
\[
       x^*Rx=\begin{bmatrix}y\\x\end{bmatrix}^{\!*}
       B
       \begin{bmatrix}y\\x\end{bmatrix}
       \ge \min_w
       \begin{bmatrix}w\\x\end{bmatrix}^{\!*}
       B
       \begin{bmatrix}w\\x\end{bmatrix}
       =x^*(B/B_{11})x.
\]The last equality is the standard variational characterization of the Schur
complement\footnote{Indeed, completing the square gives
	\[
	\begin{aligned}
		\begin{bmatrix}w\\x\end{bmatrix}^{\!*}
		B
		\begin{bmatrix}w\\x\end{bmatrix}
		&=
		\bigl(w+B_{11}^{-1}B_{12}x\bigr)^*
		B_{11}
		\bigl(w+B_{11}^{-1}B_{12}x\bigr) +
		x^*\bigl(B_{22}-B_{21}B_{11}^{-1}B_{12}\bigr)x .
	\end{aligned}
	\]
	Since \(B_{11}>0\), the first term is nonnegative and vanishes at
	\(w=-B_{11}^{-1}B_{12}x\).  Hence the minimum equals
	\(x^*(B/B_{11})x\).
	This proves $R\ge B/B_{11}$.}.    Taking imaginary parts and minimizing the corresponding quadratic form for $C$ gives $T\ge C/C_{11}$.

Finally, if $H>0$ and $m=\lambda_{\min}(H)$, then every principal Schur complement $H/H_{11}$ satisfies $H/H_{11}\ge mI$.  Indeed, $(H/H_{11})^{-1}$ is a principal submatrix of $H^{-1}$, and $H^{-1}\le m^{-1}I$.  Applying this to $B$ and $C$ gives the last assertion.
\end{proof}

\section{Sharp scalar estimate for Higham blocks}\label{sec:twodim}
The scalar Schur complement estimate is reduced to the following elementary two-dimensional inequality.  This is the key technical lemma of the paper.  The formulation is slightly more general than is needed for Higham matrices $\HH_n$, because it also covers the accretive-dissipative case $\AD_n$ in Section~\ref{sec:ad}.

For $\phi\in[-\pi,\pi]$ define
\begin{equation*}
\vartheta(\phi)=
\begin{cases}
-\phi/3, & -\pi\le \phi<0,\\[1mm]
\phi, & 0\le \phi\le \pi/2,\\[1mm]
(2\pi-\phi)/3, & \pi/2<\phi\le \pi .
\end{cases}
\end{equation*}
Then $\vartheta(\phi)\in[0,\pi/2]$.
\begin{lem}\label{lem:twodim}
For every $d>0$, every $\phi\in[-\pi,\pi]$, and all $s,t\in\mathbb C$,
\begin{equation}\label{eq:twodim}
\Ree\left(e^{-i\phi}\frac{(\overline{s}+i\overline{t})(s+i t)}{1+i d}\right)
\le
\cos\vartheta(\phi)\,|s|^2+\sin\vartheta(\phi)\,\frac{|t|^2}{d} .
\end{equation}
In particular, when $s,t\in \mathbb{R}$,
\begin{equation*}
\Ree\left(e^{-i\phi}\frac{(s+i t)^2}{1+i d}\right)
\le
\cos\vartheta(\phi)\,s^2+\sin\vartheta(\phi)\,\frac{t^2}{d} .
\end{equation*}
\end{lem}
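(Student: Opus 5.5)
The plan is to turn \eqref{eq:twodim} into a real $2\times2$ positive semidefiniteness statement and then verify that statement by trigonometry on each of the three branches of $\vartheta$. The first step is to expand the numerator. For $s,t\in\mathbb C$ one has
\[
(\overline{s}+i\overline{t})(s+it)=|s|^2-|t|^2+i(\overline{s}t+\overline{t}s)=|s|^2-|t|^2+2i\,\Ree(\overline{s}t),
\]
so the only dependence on the phases of $s,t$ is through the real number $\Ree(\overline{s}t)\in[-|s||t|,\,|s||t|]$.

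Next put $c=e^{-i\phi}/(1+id)$, $a=\Ree c$ and $b=\Imm c$. With $x=|s|$, $y=|t|$, the left-hand side of \eqref{eq:twodim} equals $a(x^2-y^2)-2b\,\Ree(\overline{s}t)$, which is at most $a(x^2-y^2)+2|b|xy$. It therefore suffices to show that
\[
M=\begin{bmatrix}\cos\vartheta-a & -|b|\\ -|b| & \sin\vartheta/d+a\end{bmatrix}
\]
is positive semidefinite, with $\vartheta=\vartheta(\phi)$. The real case $s,t\in\mathbb R$ is then a special case.

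To parametrize, write $d=\tan\psi$ with $\psi\in(0,\pi/2)$. Then $c=\cos\psi\,e^{-i(\phi+\psi)}$, so $a=\cos\psi\cos(\phi+\psi)$ and $a^2+b^2=\cos^2\psi$. Positive semidefiniteness of $M$ amounts to three conditions:
\begin{itemize}
\item the diagonal bound $a\le\cos\vartheta$;
\item the diagonal bound $a\ge-\sin\vartheta\cot\psi$;
\item the determinant condition
\[
\cos\vartheta\sin\vartheta\cot\psi+a(\cos\vartheta-\sin\vartheta\cot\psi)\ge\cos^2\psi .
\]
\end{itemize}
Multiplying the determinant condition by $\sin\psi$ and using product-to-sum formulas should turn it into a single trigonometric inequality in $(\phi,\psi)$, of the form $F(\phi,\psi)\ge0$ on $[-\pi,\pi]\times(0,\pi/2)$.

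The main obstacle is verifying $F\ge0$ separately on the three branches $\phi<0$, $0\le\phi\le\pi/2$ and $\phi>\pi/2$. I would first check the middle branch $\vartheta=\phi$, where the determinant condition should reduce to a perfect square or to something like $\sin(\cdot)\sin(\cdot)\ge0$. On the outer branches, with $\vartheta=-\phi/3$ or $(2\pi-\phi)/3$, I expect the worst $\psi$ to give equality. So I would compute $\min_\psi F$ via $\partial_\psi F=0$, expecting a critical point tied to $\psi$ in a linear relation with $\phi$ (e.g.\ $\phi+\psi$ proportional to $\vartheta$), since the factor $1/3$ suggests a triple-angle identity. If that fails, I would fall back on treating $F$ as a quadratic in $\cot\psi$ (after dividing by $\sin^2\psi$) and checking that its discriminant is nonpositive on each branch. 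The diagonal conditions should follow from the same parametrization, since $|a|\le\cos\psi$.
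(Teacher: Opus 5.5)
\textbf{Reduction.} Your reduction is correct and is essentially the paper's. Since $(\overline s+i\overline t)(s+it)=|s|^2-|t|^2+2i\Ree(\overline s t)$, the left side of \eqref{eq:twodim} is a real quadratic form. Your $M$ is the paper's $K_{\phi,d}=\diag(\cos\vartheta,\sin\vartheta)-H_{\phi,d}$ conjugated by $\diag(1,1/\sqrt d)$; the sign of the off-diagonal entry does not affect semidefiniteness. So $M\ge0$ is exactly what must be proved.

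\textbf{Missing determinant computation.} The proposal stops where the proof starts. Nonnegativity of $\det M$ on the outer branches is the whole content of the lemma, and the only place the factor $1/3$ in $\vartheta$ enters. You leave it as an expectation plus a list of possible strategies. The missing idea is that on both outer branches $\phi\equiv-3\vartheta\pmod{2\pi}$, so $\cos\phi=\cos3\vartheta$ and $\sin\phi=-\sin3\vartheta$. After this substitution the determinant is a perfect square: the paper shows $(1+d^2)\det K_{\phi,d}=4\sin\vartheta\cos\vartheta\,(\sin\vartheta-d\cos\vartheta)^2$. In your variables this reads $\det M=2\sin2\vartheta\,\sin^2(\psi-\vartheta)\cot\psi\ge0$, with equality exactly at $\psi=\vartheta$. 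So your guess of an equality case tied linearly to $\phi$ is right, but it has to be established, not expected. On the middle branch, $\det M\equiv0$ and $\cos\vartheta-a=\sin\psi\,\sin(\phi+\psi)\ge0$, so $M$ is a rank-one semidefinite matrix, as in the paper.

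\textbf{Diagonal conditions.} These do not follow ``since $|a|\le\cos\psi$''. That only gives $a\le\cos\psi$, whereas you need $a\le\cos\vartheta$, and $\cos\psi>\cos\vartheta$ whenever $\psi<\vartheta$. Likewise, $a\ge-\sin\vartheta\cot\psi$ would need $\sin\psi\le\sin\vartheta$. Because $\det M\ge0$, one nonnegative diagonal entry suffices, but it needs a genuine argument.

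The paper obtains $(K_{\phi,d})_{11}>0$ from the quadratic $\cos\vartheta\,d^2-\sin3\vartheta\,d+\cos\vartheta-\cos3\vartheta$. Its discriminant is negative for $0<\vartheta\le\pi/3$, and its middle coefficient is nonnegative for $\pi/3\le\vartheta\le\pi/2$.

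In your parametrization it is also easy, using $a=\tfrac12[\cos3\vartheta+\cos(\phi+2\psi)]$:
\begin{itemize}
\item For $\phi<0$, where $\cos\vartheta\ge\tfrac12$, you get $\cos\vartheta-a\ge\tfrac12(1-\cos\vartheta)(4\cos^2\vartheta+4\cos\vartheta-1)\ge0$.
\item For $\phi>\pi/2$, you have $\phi+\psi\in(\pi/2,3\pi/2)$, hence $a<0<\cos\vartheta$.
\end{itemize}

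With the determinant identity and this diagonal argument supplied, your argument becomes a complete proof along the same lines as the paper's.
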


\begin{proof}
Put $w=(s,t/\sqrt d)^T$.  A direct expansion shows that the left-hand side of \eqref{eq:twodim} is $w^*H_{\phi,d}w$, where
\[
H_{\phi,d}=\frac{1}{1+d^2}
\begin{bmatrix}
\cos\phi-d\sin\phi & \sqrt d\,(d\cos\phi+\sin\phi)\\[1mm]
\sqrt d\,(d\cos\phi+\sin\phi) & d(d\sin\phi-\cos\phi)
\end{bmatrix}.
\]
It is enough to prove
\[
K_{\phi,d}:=
\begin{bmatrix}
\cos\vartheta(\phi)&0\\
0&\sin\vartheta(\phi)
\end{bmatrix}
-H_{\phi,d}\ge0.
\]

If $0\le\phi\le\pi/2$, then $\vartheta(\phi)=\phi$ and
\[
K_{\phi,d}=\frac{d\cos\phi+\sin\phi}{1+d^2}
\begin{bmatrix}
 d&-\sqrt d\\
 -\sqrt d&1
\end{bmatrix}\ge0.
\]

It remains to consider $\phi\in[-\pi,0)\cup(\pi/2,\pi]$.  Then, with $\vartheta=\vartheta(\phi)$,
\[
        \cos\phi=\cos3\vartheta,
        \qquad
        \sin\phi=-\sin3\vartheta,
        \qquad
        0<\vartheta\le\pi/2 .
\]
A calculation gives
\begin{equation}\label{eq:k11}
(1+d^2)(K_{\phi,d})_{11}
 =\cos\vartheta\,d^2-\sin3\vartheta\,d+\cos\vartheta-\cos3\vartheta
\end{equation}
and
\begin{equation}\label{eq:kdet}
(1+d^2)\det K_{\phi,d}
 =4\sin\vartheta\cos\vartheta\,(
   \sin\vartheta-d\cos\vartheta)^2\ge0 .
\end{equation}
We prove $(K_{\phi,d})_{11}>0$.  If $0<\vartheta\le\pi/3$, then the right-hand side of \eqref{eq:k11} is a quadratic polynomial in $d$ with leading coefficient $\cos\vartheta>0$.  Its discriminant $\Delta$ satisfies
\[
-\Delta
=4\cos\vartheta(\cos\vartheta-\cos3\vartheta)-\sin^2 3\vartheta
=\sin^2\vartheta\bigl(7+8\sin^2\vartheta-16\sin^4\vartheta\bigr)>0,
\]
because $0<\sin^2\vartheta\le3/4$.  Thus the quadratic is positive for every $d>0$.  If $\pi/3\le\vartheta\le\pi/2$, then $\sin3\vartheta\le0$ and the expression in \eqref{eq:k11} is positive for every $d>0$.

Therefore $(K_{\phi,d})_{11}>0$ and, by \eqref{eq:kdet}, $\det K_{\phi,d}\ge0$.  Hence $K_{\phi,d}\ge0$.
\end{proof}
Next, we prove the following sharp scalar estimate for Higham blocks.

\begin{prop}\label{prop:scalar-higham}
Let
$$
M=\begin{bmatrix}G&z\\ z^T&\alpha\end{bmatrix}\in\HH_{m+1}
=
\begin{bmatrix}P&u\\ u^T&\beta\end{bmatrix}
+i
\begin{bmatrix}Q&v\\ v^T&\gamma\end{bmatrix},
$$
where $G=P+i Q$, $z=u+i v$, and $\alpha=\beta+i\gamma$.  Suppose that the real and imaginary parts of $M$ have condition numbers at most $\omega$.  Then
\begin{equation}\label{eq:sharp-scalar-q}
       |z^T G^{-1}z|\le \left(\frac{\omega-1}{\omega+1}\right)^2 |\alpha|.
\end{equation}
Consequently, the scalar Schur complement
$
       \sigma=\alpha-z^T G^{-1}z
$
satisfies the two-sided bound
\begin{equation}\label{eq:sharp-scalar-sigma}
       \frac{4\omega}{(1+\omega)^2}|\alpha|
       \le |\sigma|
       \le \frac{2(1+\omega^2)}{(1+\omega)^2}|\alpha| .
\end{equation}
The constant $\left(\frac{\omega-1}{\omega+1}\right)^2$ in \eqref{eq:sharp-scalar-q} and both endpoints in \eqref{eq:sharp-scalar-sigma} are sharp.
\end{prop}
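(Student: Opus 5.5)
The plan is to bound $\Ree\bigl(e^{-i\phi}q\bigr)$ for $q:=z^TG^{-1}z$ and every phase $\phi$, and then take $\phi=\arg q$. Since $P$ and $Q$ are real symmetric positive definite, they can be simultaneously congruence-diagonalized. That is, there is a real invertible $S$ with
\[
S^TPS=I,\qquad S^TQS=D=\diag(d_1,\dots,d_m),\qquad d_j>0 .
\]
Then $G=S^{-T}(I+iD)S^{-1}$, so $G^{-1}=S(I+iD)^{-1}S^T$. Put $s=S^Tu$ and $t=S^Tv$; these are real vectors. This gives
\[
q=\sum_{j=1}^m\frac{(s_j+it_j)^2}{1+id_j},\qquad u^TP^{-1}u=\sum_j s_j^2,\qquad v^TQ^{-1}v=\sum_j \frac{t_j^2}{d_j},
\]
where the last two identities use $P^{-1}=SS^T$ and $Q^{-1}=SD^{-1}S^T$.

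Fix $\phi$ and write $\vartheta=\vartheta(\phi)\in[0,\pi/2]$. Apply the real case of Lemma~\ref{lem:twodim} coordinatewise and sum over $j$:
\[
\Ree\bigl(e^{-i\phi}q\bigr)\le \cos\vartheta\;u^TP^{-1}u+\sin\vartheta\;v^TQ^{-1}v .
\]
Now apply Lemma~\ref{lem:kantorovich-block} separately to the real part and to the imaginary part of $M$; both have condition number at most $\omega$. With $c:=(\omega-1)/(\omega+1)$ this gives $u^TP^{-1}u\le c^2\beta$ and $v^TQ^{-1}v\le c^2\gamma$. Since $\cos\vartheta,\sin\vartheta\ge0$, we obtain
\[
\Ree\bigl(e^{-i\phi}q\bigr)\le c^2(\beta\cos\vartheta+\gamma\sin\vartheta)\le c^2\sqrt{\beta^2+\gamma^2}=c^2|\alpha|,
\]
where the second step is Cauchy--Schwarz. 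Choosing $\phi=\arg q$ (any $\phi$ if $q=0$) makes the left side equal to $|q|$, which proves \eqref{eq:sharp-scalar-q}.

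The bounds \eqref{eq:sharp-scalar-sigma} follow from the triangle inequality: $|\alpha|-|q|\le|\sigma|\le|\alpha|+|q|$. Together with $1-c^2=4\omega/(1+\omega)^2$ and $1+c^2=2(1+\omega^2)/(1+\omega)^2$, this gives both endpoints.

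For sharpness I take $m=1$ with $2\times2$ blocks whose eigenvalues are $1\pm c$. These have condition number $(1+c)/(1-c)=\omega$, and the case $\omega=1$ (so $c=0$) is trivial.
\begin{itemize}
\item For the upper endpoint take $P=\begin{bmatrix}1&c\\ c&1\end{bmatrix}$ and $Q=\begin{bmatrix}1&-c\\ -c&1\end{bmatrix}$. Then $G=\alpha=1+i$ and $z=c(1-i)$, so $q=c^2(1-i)^2/(1+i)=-c^2(1+i)=-c^2\alpha$. Hence $|q|=c^2|\alpha|$ and $|\sigma|=(1+c^2)|\alpha|$.
\item For the lower endpoint take $P=Q=\begin{bmatrix}1&c\\ c&1\end{bmatrix}$. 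Then $z=c(1+i)$ and $q=c^2\alpha$, so $|\sigma|=(1-c^2)|\alpha|$.
\end{itemize}
Either example also shows that the constant in \eqref{eq:sharp-scalar-q} is attained.

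The genuinely hard step is controlling the phase of $q$ while keeping the real and imaginary data decoupled. That work is entirely absorbed by Lemma~\ref{lem:twodim}. Once the simultaneous diagonalization reduces $q$ to a sum of two-dimensional terms, what remains is bookkeeping plus the Kantorovich estimate. The point I would double-check is that the diagonalizing congruence can be taken real, so that $s$ and $t$ are real and the real case of the lemma applies. This holds because $P$ and $Q$ are real symmetric.
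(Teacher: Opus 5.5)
Your proposal is correct and follows the paper's proof essentially step for step. Both arguments use a real simultaneous congruence diagonalization of $P$ and $Q$, apply the real case of Lemma~\ref{lem:twodim} term by term with $\phi=\arg q$, use the Kantorovich block estimate on the real and imaginary parts of $M$, then Cauchy--Schwarz and the triangle inequalities, and both establish sharpness with the same $2\times 2$ matrices $A_\omega^{\pm}$.
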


\begin{proof} By simultaneous congruence diagonalization of two real symmetric
	positive definite matrices, we can
choose a real nonsingular matrix $R$ such that
\[
       R^T P R=I,
       \qquad
       R^T Q R=D=\diag(d_1,\ldots,d_m),
       \qquad d_j>0.
\]
Write
\[
       R^Tz=x+i y,
\]
where $x=(x_1,\ldots,x_m)^T,y=(y_1,\ldots,y_m)^T\in\mathbb R^m.$
Then
\begin{equation}\label{eq:qsum-higham}
       q:=z^TG^{-1}z
       =\sum_{j=1}^m\frac{(x_j+i y_j)^2}{1+i d_j}.
\end{equation}
By Lemma~\ref{lem:kantorovich-block}, applied to the two real positive definite block matrices
\[
       \begin{bmatrix}P&u\\ u^T&\beta\end{bmatrix},
       \qquad
       \begin{bmatrix}Q&v\\ v^T&\gamma\end{bmatrix},
\]
we have
\begin{equation*}
       x^Tx=u^TP^{-1}u\le \left(\frac{\omega-1}{\omega+1}\right)^2\beta,
       \qquad
       y^TD^{-1}y=v^TQ^{-1}v\le \left(\frac{\omega-1}{\omega+1}\right)^2\gamma .
\end{equation*}
If $q=0$, \eqref{eq:sharp-scalar-q} is immediate.  Otherwise let $\phi=\arg q\in[-\pi,\pi]$.  Applying Lemma~\ref{lem:twodim} term by term to \eqref{eq:qsum-higham} gives
\[
\begin{aligned}
       |q|&=\Ree(e^{-i\phi}q)  \\
       &\le \cos\vartheta(\phi)\sum_{j=1}^m x_j^2
       +\sin\vartheta(\phi)\sum_{j=1}^m\frac{y_j^2}{d_j}  \\
       &\le \left(\frac{\omega-1}{\omega+1}\right)^2\bigl(\beta\cos\vartheta(\phi)+\gamma\sin\vartheta(\phi)\bigr)
       \le \left(\frac{\omega-1}{\omega+1}\right)^2\sqrt{\beta^2+\gamma^2}
       =\left(\frac{\omega-1}{\omega+1}\right)^2|\alpha|.
\end{aligned}
\]
This proves \eqref{eq:sharp-scalar-q}.  The two-sided estimate \eqref{eq:sharp-scalar-sigma} follows from the triangle and reverse triangle inequalities:
\[
       |\alpha|-|q|\le |\sigma|\le |\alpha|+|q|.
\]

It remains only to record sharpness.  Consider the two $2\times2$ Higham matrices
\begin{equation}\label{eq:scalar-extremal-families}
       A_\omega^-=
       \begin{bmatrix}
       1+i&\dfrac{\omega-1}{\omega+1}(1+i)\\[1mm]
       \dfrac{\omega-1}{\omega+1}(1+i)&1+i
       \end{bmatrix},
       \qquad
       A_\omega^+=
       \begin{bmatrix}
       1+i&\dfrac{\omega-1}{\omega+1}(1-i)\\[1mm]
       \dfrac{\omega-1}{\omega+1}(1-i)&1+i
       \end{bmatrix}.
\end{equation}
For $A_\omega^-$, the real and imaginary parts are both
\(
\bigl[\begin{smallmatrix}1&\frac{\omega-1}{\omega+1}\\[.5mm]
\frac{\omega-1}{\omega+1}&1\end{smallmatrix}\bigr]
\), so their condition numbers are $\omega$.  Moreover
\[
       \frac{\left(\frac{\omega-1}{\omega+1}\right)^2(1+i)^2}{1+i}
       =\left(\frac{\omega-1}{\omega+1}\right)^2(1+i),
\]
and hence
\[
       \sigma=\frac{4\omega}{(1+\omega)^2}(1+i).
\]
Thus equality holds in the lower endpoint of \eqref{eq:sharp-scalar-sigma}, and equality also holds in \eqref{eq:sharp-scalar-q}.  For $A_\omega^+$, the real and imaginary parts are
\(
\bigl[\begin{smallmatrix}1&\frac{\omega-1}{\omega+1}\\[.5mm]
\frac{\omega-1}{\omega+1}&1\end{smallmatrix}\bigr]
\)
and
\(
\bigl[\begin{smallmatrix}1&-\frac{\omega-1}{\omega+1}\\[.5mm]
-\frac{\omega-1}{\omega+1}&1\end{smallmatrix}\bigr]
\), again with condition numbers $\omega$, and
\[
       \frac{\left(\frac{\omega-1}{\omega+1}\right)^2(1-i)^2}{1+i}
       =-\left(\frac{\omega-1}{\omega+1}\right)^2(1+i).
\]
Therefore
\[
       \sigma=\frac{2(1+\omega^2)}{(1+\omega)^2}(1+i),
\]
so equality holds in the upper endpoint of \eqref{eq:sharp-scalar-sigma}, and equality also holds in \eqref{eq:sharp-scalar-q}.  This proves sharpness.
\end{proof}

\section{Proof of Theorems~\ref{thm:main} and \ref{thm:sharp}}\label{sec:main-proof}
We now prove Theorem~\ref{thm:main}.  The upper estimate uses the scalar bound from Proposition~\ref{prop:scalar-higham}.  The lower estimate uses the Loewner domination in Lemma~\ref{lem:schur-loewner-lower}.

\begin{proof}[Proof of Theorem~\ref{thm:main}]
Let
\[
M_0:=\max_{i,j}|a_{ij}|.
\]
By Lemma~\ref{lem:diagonal-max}, we know
\begin{equation*}
	M_0=\max_i |a_{ii}|.
\end{equation*}
Let
\[
m_B=\lambda_{\min}(B),\quad M_B=\lambda_{\max}(B),
\qquad
m_C=\lambda_{\min}(C),\quad M_C=\lambda_{\max}(C).
\]
Then\footnote{Here the diagonal maximality
	is essential.  Since \(A=B+iC\in\HH_n\), Lemma~\ref{lem:diagonal-max} gives
	\(\max_{i,j}|a_{ij}|=\max_i|a_{ii}|\).  For each diagonal entry,
	\(a_{ii}=b_{ii}+ic_{ii}\), and hence
	\[
	|a_{ii}|=\sqrt{b_{ii}^2+c_{ii}^2}.
	\]
	Moreover,
	\(b_{ii}=e_i^TBe_i\le\lambda_{\max}(B)=M_B\) and
	\(c_{ii}=e_i^TCe_i\le\lambda_{\max}(C)=M_C\).  Therefore
	\[
	|a_{ii}|\le \sqrt{M_B^2+M_C^2}.
	\]
	The second inequality in \eqref{eq:M0-eigen-bound} follows from
	\(\kappa(B)=M_B/m_B\le\omega\) and \(\kappa(C)=M_C/m_C\le\omega\), which imply
	\(M_B\le\omega m_B\) and \(M_C\le\omega m_C\).}
\begin{equation}\label{eq:M0-eigen-bound}
	M_0\le \sqrt{M_B^2+M_C^2}
	\le \omega\sqrt{m_B^2+m_C^2}.
\end{equation}

For $1\le k\le n-1$, let $S_k=A^{(k)}$ be the active matrix after $k$ elimination steps.  Equivalently,
\[
       S_k=A/A[I_k,I_k],
       \qquad I_k=\{1,\ldots,k\}.
\]
The leading principal block $A[I_k,I_k]$ is Higham and hence nonsingular.  By Lemma~\ref{lem:inverse-schur}, $S_k$ is a Higham matrix; in particular Gaussian elimination without pivoting is well defined.  Write
\[
       S_k=P_k+i Q_k,
       \qquad P_k,Q_k>0.
\]
Lemma~\ref{lem:schur-loewner-lower}, applied to the Schur complement of the principal block indexed by $I_k$, gives
\[
       P_k\ge m_B I,
       \qquad
       Q_k\ge m_C I.
\]
Hence every diagonal entry of $S_k$ satisfies
\[
       |(S_k)_{rr}|
       =\sqrt{(P_k)_{rr}^2+(Q_k)_{rr}^2}
       \ge \sqrt{m_B^2+m_C^2}.
\]
Together with \eqref{eq:M0-eigen-bound}, this yields
\[
       \max_{r,s}|(S_k)_{rs}|
       \ge \max_r |(S_k)_{rr}|
       \ge \frac{M_0}{\omega}.
\]
Therefore $\rho_{n,k}(A)\ge1/\omega$ for every $1\le k\le n-1$.

It remains to prove the upper estimate.  By Lemma~\ref{lem:diagonal-max} applied to the active Higham matrix $S_k$,
\begin{equation}\label{eq:Skdiagmax}
       \max_{r,s}|(S_k)_{rs}|=\max_r |(S_k)_{rr}|.
\end{equation}
Fix $k$ and a remaining index $j>k$.  The $j$th diagonal entry of $S_k$, with indices written in the original numbering, is
\begin{equation*}
       (S_k)_{jj}=a_{jj}-a_{jI_k}A[I_k,I_k]^{-1}a_{I_kj}.
\end{equation*}
Consider the principal matrix
\[
       A[I_k\cup\{j\},I_k\cup\{j\}]
       =\begin{bmatrix}
          A[I_k,I_k]&a_{I_kj}\\
          a_{jI_k}&a_{jj}
        \end{bmatrix}.
\]
It is a Higham matrix.  Its real and imaginary parts are principal submatrices of $B$ and $C$, respectively, so their condition numbers are at most $\omega$ by Cauchy interlacing theorem.  Proposition~\ref{prop:scalar-higham} applied to this block gives
\[
       |(S_k)_{jj}|\le\frac{2(1+\omega^2)}{(1+\omega)^2} |a_{jj}|\le\frac{2(1+\omega^2)}{(1+\omega)^2} M_0.
\]
Using \eqref{eq:Skdiagmax}, we obtain
\[
       \max_{r,s}|(S_k)_{rs}|\le\frac{2(1+\omega^2)}{(1+\omega)^2} M_0
       \qquad (1\le k\le n-1).
\]
Thus $\rho_{n,k}(A)\le\frac{2(1+\omega^2)}{(1+\omega)^2}$ for every $k$, and taking the maximum over $1\le k\le n-1$ gives \eqref{eq:main-bound}.
\end{proof}

We prove Theorem~\ref{thm:sharp} by explicit examples.

\begin{proof}[Proof of Theorem~\ref{thm:sharp}]
For the lower constant, fix $n\ge2$ and set
\[
       D_\omega=\diag(\omega,1,\ldots,1),
       \qquad
       A_{\omega,n}^{\rm low}=(1+i)D_\omega .
\]
Then $B=C=D_\omega$ are real symmetric positive definite and
\[
       \kappa(B)=\kappa(C)=\omega.
\]
The matrix is diagonal, so Gaussian elimination produces no fill-in.  Its initial largest entry has modulus $\omega\sqrt2$, while after every step $1\le k\le n-1$ the active matrix has largest entry of modulus $\sqrt2$.  Therefore
\[
       \rho_{n,k}(A_{\omega,n}^{\rm low})=\rho_n(A_{\omega,n}^{\rm low})=\frac1\omega
       \qquad (1\le k\le n-1).
\]
This proves sharpness of the lower constant at every fixed stage.

For the upper constant, we set
\begin{equation*}
       A_\omega=
       \begin{bmatrix}
       1+i&\dfrac{\omega-1}{\omega+1}(1-i)\\[1mm]
       \dfrac{\omega-1}{\omega+1}(1-i)&1+i
       \end{bmatrix}.
\end{equation*}
Then
\[
       \Ree A_\omega=
       \begin{bmatrix}1&\dfrac{\omega-1}{\omega+1}\\[1mm]
       \dfrac{\omega-1}{\omega+1}&1\end{bmatrix},
       \qquad
       \Imm A_\omega=
       \begin{bmatrix}1&-\dfrac{\omega-1}{\omega+1}\\[1mm]
       -\dfrac{\omega-1}{\omega+1}&1\end{bmatrix}.
\]
Both are real symmetric positive definite and both have eigenvalues
\[
       1-\frac{\omega-1}{\omega+1}=\frac2{\omega+1},
       \qquad
       1+\frac{\omega-1}{\omega+1}=\frac{2\omega}{\omega+1}.
\]
Thus
\[
       \kappa(\Ree A_\omega)=\kappa(\Imm A_\omega)=\omega.
\]
The largest initial entry has modulus $|1+i|=\sqrt2$.  After the first elimination step, the trailing scalar is
\[
       (1+i)-
       \frac{\left(\frac{\omega-1}{\omega+1}\right)^2(1-i)^2}{1+i}
       =\frac{2(1+\omega^2)}{(1+\omega)^2}(1+i).
\]
Consequently
\[
       \rho_2(A_\omega)=\frac{2(1+\omega^2)}{(1+\omega)^2}.
\]
This proves sharpness of the upper constant for every $1\le\omega<\infty$.
\end{proof}

\begin{rem}
For each fixed Higham matrix the number $\omega=\max\{\kappa(B),\kappa(C)\}$ is finite, so Theorem~\ref{thm:main} gives
\[
       \rho_n(A)\le \frac{2(1+\omega^2)}{(1+\omega)^2}<2
\]
whenever $\omega>1$, and equality $\rho_n(A)=1$ when $\omega=1$.  The sharp examples above satisfy $\rho_2(A_\omega)=\frac{2(1+\omega^2)}{(1+\omega)^2}\to2$ as $\omega\to\infty$, which proves that the unconditioned constant $2$ is the sharp uniform upper constant over the whole Higham class $\HH_n$.
\end{rem}

\section{The case of accretive-dissipative matrices}\label{sec:ad}
The preceding scalar estimate has a Hermitian analogue.  This is useful for separating what is genuinely Higham-specific from what remains true for the larger accretive-dissipative class.
We first record the hereditary property needed for Gaussian elimination
in this class.

\begin{lem}\label{prop:ad-inverse-schur}
If $G=P+i Q\in\AD_n$, then 
\[
       G^{-1}=X-i Y,
       \qquad X,Y>0 .
\]
Consequently, every Schur complement of a principal block of a matrix in $\AD_n$ again belongs to the corresponding accretive-dissipative class.
\end{lem}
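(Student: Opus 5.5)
The plan is to work with the Hermitian (not transpose) structure throughout. The first step is to show that $G^{-1}$ has positive definite Hermitian part and negative definite skew part. Since $P>0$, write $G=P^{1/2}(I+iK)P^{1/2}$ with $K=P^{-1/2}QP^{-1/2}>0$ Hermitian. Then
\[
G^{-1}=P^{-1/2}(I+iK)^{-1}P^{-1/2},
\]
where $G$ is invertible because $I+iK$ is normal with eigenvalues $1+i\mu_j$, $\mu_j>0$. We have
\[
(I+iK)^{-1}=(I+K^2)^{-1}-iK(I+K^2)^{-1}.
\]
Here $K$ and $(I+K^2)^{-1}$ commute, so both $(I+K^2)^{-1}$ and $K(I+K^2)^{-1}$ are Hermitian positive definite. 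Congruence by $P^{-1/2}$ preserves Hermitian positivity, so setting
\[
X=P^{-1/2}(I+K^2)^{-1}P^{-1/2}>0,\qquad Y=P^{-1/2}K(I+K^2)^{-1}P^{-1/2}>0
\]
gives $G^{-1}=X-iY$. An equivalent direct route is to note that for $w=G^{-1}x$, $x^*G^{-1}x=(Gw)^*w=\overline{w^*Gw}$. Hence $\Ree x^*G^{-1}x=w^*Pw>0$ and $\Imm x^*G^{-1}x=-w^*Qw<0$ for $x\neq0$. I would probably present this one-line quadratic-form argument, since it avoids square roots.

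For the consequence, the plan is as follows. First, every principal submatrix of $G\in\AD_n$ lies in $\AD$, since principal submatrices of Hermitian positive definite $P,Q$ are again Hermitian positive definite. In particular every principal block $G_{11}$ is nonsingular by the first part, and the Schur complement $G/G_{11}$ is defined. Next, use the standard identity that $(G/G_{11})^{-1}$ is the trailing principal block of $G^{-1}$. By the first part, $G^{-1}=X-iY$, so this block is $X_{22}-iY_{22}$ with $X_{22},Y_{22}>0$. So $S:=G/G_{11}$ is invertible and $S^{-1}=X_{22}-iY_{22}$. This is a matrix whose Hermitian part is positive definite and whose skew-Hermitian part, divided by $i$, is negative definite. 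Then apply the first part's argument in reverse: $-i$ times $\overline{\phantom{x}}$, or more precisely the same quadratic-form trick applied to $S=(S^{-1})^{-1}$. For $x\neq0$ and $w=Sx$,
\[
x^*Sx=\overline{w^*S^{-1}w},
\]
whose real part is $w^*X_{22}w>0$ and whose imaginary part is $w^*Y_{22}w>0$. Hence $S=R+iT$ with $R,T>0$, i.e. $S\in\AD$.

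Alternatively, and perhaps more simply, Lemma~\ref{lem:schur-loewner-lower} already gives $R\ge B/B_{11}>0$ and $T\ge C/C_{11}>0$ once nonsingularity of $A_{11}$ is known. Nonsingularity is exactly the first part applied to the principal block. I would present this as the primary proof of the consequence, with the inverse-block argument as a remark. There is no real obstacle. The only point needing care is ensuring the principal block is nonsingular before forming the Schur complement, which the first assertion supplies.
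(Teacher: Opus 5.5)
Your proof is correct, and it takes a different route from the paper mainly because the paper gives no argument of its own. It cites Ikramov (Lemma~1) and Xue--Hu for the inverse formula, and George--Ikramov (Property~6) for closure under Schur complements.

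\textbf{Inverse formula.} Your identity $x^*G^{-1}x=\overline{w^*Gw}$, with $w=G^{-1}x$, gives $G^{-1}=X-iY$ with $X,Y>0$ in one line. If you present only this route rather than the $P^{-1/2}(I+iK)P^{-1/2}$ factorization, add the invertibility check: $Gw=0$ forces $w^*Pw=\Ree w^*Gw=0$, hence $w=0$.

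\textbf{Schur-complement closure.} Both of your routes work.
\begin{itemize}
\item The first uses the inverse-block identity $(G/G_{11})^{-1}=(G^{-1})_{22}=X_{22}-iY_{22}$ plus the same conjugation trick. This matches the word ``Consequently'' in the statement.
\item The second goes through Lemma~\ref{lem:schur-loewner-lower}, and it is not circular. That lemma's proof uses only the identity $[y;x]^*A[y;x]=x^*Sx$ and the variational characterization of $B/B_{11}$, not the present lemma. It also gives the stronger quantitative bounds $R\ge B/B_{11}$ and $T\ge C/C_{11}$.
\end{itemize}

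\textbf{What your version buys.} It is self-contained and elementary. It also shows that, given Lemma~\ref{lem:schur-loewner-lower}, the external citation for closure is redundant within this paper. Nonsingularity of the principal block follows directly from $\Ree w^*G_{11}w=w^*P_{11}w>0$, without even needing the inverse formula.

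For a principal block that is not leading, add one sentence: a permutation similarity preserves $\AD_n$ and moves the block to the leading position.
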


\begin{proof}
The inverse formula was established in \cite[Lemma~1]{Ikramov2004}
and is also reproduced in \cite[Lemma~2]{XueHu2015}.  The
Schur-complement closure was obtained in
\cite[Property~6]{GeorgeIkramov2005} and is also recorded in
\cite[Lemma~1]{XueHu2015}.
\end{proof}
Now, we present a sharp scalar estimate in the accretive-dissipative class $\AD_n$.
\begin{prop}\label{prop:scalar-ad}
Let
\[
M=\begin{bmatrix}G&b+i c\\ b^*+i c^*&\alpha\end{bmatrix}
=\begin{bmatrix}P&b\\ b^*&\beta\end{bmatrix}
+i\begin{bmatrix}Q&c\\ c^*&\gamma\end{bmatrix}
\in\AD_{m+1},
\]
where $P,Q$ are Hermitian positive definite and $\alpha=\beta+i\gamma$.  Suppose that the real and imaginary parts of $M$ have condition numbers at most $\omega$.  Then
\begin{equation}\label{eq:scalar-ad-q}
       \left|(b^*+i c^*)G^{-1}(b+i c)\right|
       \le \left(\frac{\omega-1}{\omega+1}\right)^2 |\alpha|.
\end{equation}
Consequently the scalar Schur complement
\[
       \sigma=\alpha-(b^*+i c^*)G^{-1}(b+i c)
\]
satisfies
\begin{equation}\label{eq:scalar-ad-sigma}
       \frac{4\omega}{(1+\omega)^2}|\alpha|
       \le |\sigma|
       \le \frac{2(1+\omega^2)}{(1+\omega)^2} |\alpha|.
\end{equation}
The constants are sharp already in the Higham sub-class.
\end{prop}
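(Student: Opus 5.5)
Our plan is to mimic the proof of Proposition~\ref{prop:scalar-higham}, replacing the real congruence by a unitary-type congruence that preserves the Hermitian structure. Since $P,Q$ are Hermitian positive definite, we choose a nonsingular complex $R$ with $R^*PR=I$ and $R^*QR=D=\diag(d_1,\ldots,d_m)$, $d_j>0$. Then $G^{-1}=R(I+iD)^{-1}R^*$. Putting $s=R^*b$ and $t=R^*c$ (complex vectors), the row $b^*+ic^*$ becomes $(s^*+it^*)R^{-1}$ after the congruence. Hence
\[
q:=(b^*+ic^*)G^{-1}(b+ic)=\sum_{j=1}^m\frac{(\overline{s_j}+i\overline{t_j})(s_j+it_j)}{1+id_j}.
\]
This is exactly the form covered by the general (complex) version \eqref{eq:twodim} of Lemma~\ref{lem:twodim}.

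Next we apply Lemma~\ref{lem:kantorovich-block}, which is stated for Hermitian positive definite matrices, to the two blocks $\left[\begin{smallmatrix}P&b\\ b^*&\beta\end{smallmatrix}\right]$ and $\left[\begin{smallmatrix}Q&c\\ c^*&\gamma\end{smallmatrix}\right]$. This gives
\[
\sum_j|s_j|^2=b^*P^{-1}b\le\Big(\tfrac{\omega-1}{\omega+1}\Big)^2\beta,
\qquad
\sum_j|t_j|^2/d_j=c^*Q^{-1}c\le\Big(\tfrac{\omega-1}{\omega+1}\Big)^2\gamma .
\]
If $q\neq0$, set $\phi=\arg q$ and write $|q|=\Ree(e^{-i\phi}q)$. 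Applying \eqref{eq:twodim} termwise and summing, we get
\[
|q|\le\Big(\tfrac{\omega-1}{\omega+1}\Big)^2\bigl(\beta\cos\vartheta(\phi)+\gamma\sin\vartheta(\phi)\bigr).
\]
Cauchy--Schwarz then bounds the bracket by $\sqrt{\beta^2+\gamma^2}=|\alpha|$, which proves \eqref{eq:scalar-ad-q}. The two-sided bound \eqref{eq:scalar-ad-sigma} follows from the triangle and reverse triangle inequalities, just as before.

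Sharpness requires no new work. The matrices $A_\omega^\pm$ in \eqref{eq:scalar-extremal-families} are real-symmetric Higham matrices, hence lie in $\AD_2$, and they attain both endpoints.

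The main point to verify carefully is the bookkeeping of conjugations, namely that the row vector is $b^*+ic^*$ rather than $(b+ic)^*$. Concretely, we must check that the sesquilinear expression reduces entrywise to $(\overline{s_j}+i\overline{t_j})(s_j+it_j)/(1+id_j)$ with the \emph{same} $s_j,t_j$ on both sides. Once the identity $(b^*+ic^*)R=s^*+it^*$ is confirmed, everything else is already contained in Lemma~\ref{lem:twodim}, whose complex form was designed for this case.
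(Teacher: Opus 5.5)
Your proposal is correct and matches the paper's proof step for step: the same simultaneous congruence $R^*PR=I$, $R^*QR=D$, and the same termwise representation $q=\sum_j(\overline{s_j}+i\overline{t_j})(s_j+it_j)/(1+id_j)$. It also uses the Wielandt--Kantorovich bound on both parts, Lemma~\ref{lem:twodim} applied with $\phi=\arg q$ (where $\vartheta(\phi)\in[0,\pi/2]$ is what lets you multiply the Kantorovich bounds by $\cos\vartheta$ and $\sin\vartheta$), and the matrices $A_\omega^\pm$ for sharpness. The only quibble is wording: $R$ is a general nonsingular congruence, not a unitary one.
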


\begin{proof}
Choose a nonsingular complex matrix $R$ such that
\[
       R^*PR=I,
       \qquad
       R^*QR=D=\diag(d_1,\ldots,d_m),
       \qquad d_j>0.
\]
Write
\[
       R^*b=x,
       \qquad
       R^*c=y,
       \qquad x,y\in\mathbb C^m.
\]
Then
\begin{equation}\label{eq:qsum-ad}
       q:=(b^*+i c^*)G^{-1}(b+i c)
       =\sum_{j=1}^m
       \frac{(\overline{x_j}+i\overline{y_j})(x_j+i y_j)}{1+i d_j}.
\end{equation}
By Lemma~\ref{lem:kantorovich-block},
\[
       x^*x=b^*P^{-1}b\le \left(\frac{\omega-1}{\omega+1}\right)^2\beta,
       \qquad
       y^*D^{-1}y=c^*Q^{-1}c\le \left(\frac{\omega-1}{\omega+1}\right)^2\gamma .
\]
If $q=0$ there is nothing to prove.  If $q\ne0$, take $\phi=\arg q$ and apply Lemma~\ref{lem:twodim} term by term to \eqref{eq:qsum-ad}.  The same calculation as in the proof of Proposition~\ref{prop:scalar-higham} gives
\[
       |q|\le \left(\frac{\omega-1}{\omega+1}\right)^2
       \bigl(\beta\cos\vartheta(\phi)+\gamma\sin\vartheta(\phi)\bigr)
       \le \left(\frac{\omega-1}{\omega+1}\right)^2|\alpha|.
\]
Thus \eqref{eq:scalar-ad-q} holds, and \eqref{eq:scalar-ad-sigma} follows from the triangle and reverse triangle inequalities.  Sharpness follows from the Higham matrices $A_\omega^-$ and $A_\omega^+$ in \eqref{eq:scalar-extremal-families}.
\end{proof}
We obtain the following diagonal Schur-complement bounds for accretive-dissipative matrices.
\begin{cor}\label{cor:ad-diagonal}
Let $A=B+i C\in\AD_n$ and suppose
$
       \max\{\kappa(B),\kappa(C)\}\le\omega .
$
Then every diagonal entry produced by Gaussian elimination without pivoting satisfies
\begin{equation}\label{eq:ad-diagonal-growth}
       \frac{4\omega}{(1+\omega)^2}|a_{jj}|
       \le |a_{jj}^{(k)}|
       \le \frac{2(1+\omega^2)}{(1+\omega)^2} |a_{jj}|,
       \qquad j>k.
\end{equation}
Both constants in \eqref{eq:ad-diagonal-growth} are sharp for every $1\le\omega<\infty$.
\end{cor}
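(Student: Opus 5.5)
The plan is to reduce Corollary~\ref{cor:ad-diagonal} to Proposition~\ref{prop:scalar-ad}, applied to a suitable principal block, exactly as in the upper-bound part of the proof of Theorem~\ref{thm:main}.

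\textbf{Step 1 (elimination is well defined).} Every leading principal submatrix $A[I_k,I_k]$, with $I_k=\{1,\dots,k\}$, again lies in $\AD_k$: its real and imaginary parts are principal submatrices of $B$ and $C$, hence Hermitian positive definite. Consequently $\Ree(x^*A[I_k,I_k]x)>0$ for all $x\neq0$, so $A[I_k,I_k]$ is nonsingular. Thus Gaussian elimination without pivoting is well defined, and the active matrix is the Schur complement
\[
A^{(k)}=A/A[I_k,I_k].
\]
By Lemma~\ref{prop:ad-inverse-schur}, this Schur complement again lies in the accretive-dissipative class. In the original numbering, its diagonal entry at position $j>k$ is
\[
a^{(k)}_{jj}=a_{jj}-a_{jI_k}A[I_k,I_k]^{-1}a_{I_kj}.
\]

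\textbf{Step 2 (reduction to the scalar estimate).} Consider the principal block
\[
M=A[I_k\cup\{j\},I_k\cup\{j\}]=\begin{bmatrix}A[I_k,I_k]&a_{I_kj}\\ a_{jI_k}&a_{jj}\end{bmatrix}.
\]
Its real part is the principal submatrix of $B$ on $I_k\cup\{j\}$, and its imaginary part is the corresponding submatrix of $C$. Writing $a_{I_kj}=b+ic$ with $b=B_{I_kj}$ and $c=C_{I_kj}$, Hermiticity of $B$ and $C$ gives
\[
a_{jI_k}=B_{jI_k}+iC_{jI_k}=b^*+ic^*.
\]
So $M$ has precisely the form required in Proposition~\ref{prop:scalar-ad}, with $\alpha=a_{jj}$ and $\sigma=a^{(k)}_{jj}$.

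By Cauchy interlacing, the extreme eigenvalues of a principal submatrix of a positive definite matrix lie between those of the whole matrix. Hence the condition numbers of $\Ree M$ and $\Imm M$ are at most $\kappa(B)$ and $\kappa(C)$ respectively, and so at most $\omega$. Applying \eqref{eq:scalar-ad-sigma} then yields \eqref{eq:ad-diagonal-growth} directly.

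\textbf{Step 3 (sharpness).} Take $n=2$, $k=1$, $j=2$ and the matrices $A_\omega^-$ and $A_\omega^+$ from \eqref{eq:scalar-extremal-families}. These are Higham matrices, hence in $\AD_2$, and both their real and imaginary parts have condition number exactly $\omega$. The computation in the proof of Proposition~\ref{prop:scalar-higham} gives $a^{(1)}_{22}=\frac{4\omega}{(1+\omega)^2}(1+i)$ for $A_\omega^-$ and $a^{(1)}_{22}=\frac{2(1+\omega^2)}{(1+\omega)^2}(1+i)$ for $A_\omega^+$, while $a_{22}=1+i$ in both cases. So equality is attained at each endpoint, for every $\omega\ge1$.

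\textbf{Main obstacle.} The only point needing care is Step~2: one must check that the off-diagonal blocks of the bordered matrix have exactly the form $b+ic$ and $b^*+ic^*$ with the same $b,c$, which is what Hermiticity of $B$ and $C$ provides. One must also confirm that the condition-number hypothesis passes to principal submatrices, which is the interlacing argument. Everything else is a direct citation of Proposition~\ref{prop:scalar-ad}.
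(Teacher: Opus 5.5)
Your proof is correct and is essentially the paper's argument: the paper likewise applies Proposition~\ref{prop:scalar-ad} to the principal block indexed by $I_k\cup\{j\}$, as in the diagonal part of the proof of Theorem~\ref{thm:main}, with interlacing passing the condition-number bound to that block, and obtains sharpness from $A_\omega^{-}$ and $A_\omega^{+}$ in \eqref{eq:scalar-extremal-families}. Your explicit checks that the off-diagonal blocks have the form $b+ic$ and $b^*+ic^*$, and that the leading blocks are nonsingular, fill in details that the paper's one-line proof leaves implicit.
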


\begin{proof}
The proof is the same as the diagonal part of the proof of Theorem~\ref{thm:main}, using Proposition~\ref{prop:scalar-ad} on the principal block indexed by $I_k\cup\{j\}$.  Sharpness follows from the two Higham matrices in \eqref{eq:scalar-extremal-families}.
\end{proof}

For full entrywise growth in the generalized class one cannot use Lemma~\ref{lem:diagonal-max}; it is false outside the complex symmetric real-part/imaginary-part setting.  The following elementary substitute provides entrywise control in the
accretive-dissipative class.

\begin{lem}[{\cite[Lemma~2.2]{GeorgeIkramovKucherov2002}}]\label{lem:ad-entry}
If $A=(a_{ij})=B+i C\in\AD_n$, then
\begin{equation*}
       \max_{i,j}|a_{ij}|
       \le \sqrt2\,\max_i |a_{ii}|.
\end{equation*}
The factor $\sqrt2$ is sharp.
\end{lem}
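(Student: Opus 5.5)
We prove the bound from the two diagonal entries $a_{ii},a_{jj}$ and then use $|a_{ii}|,|a_{jj}|\le\max_k|a_{kk}|$. The upper bound only needs the positive semidefiniteness of the $2\times2$ principal submatrices of $B$ and $C$ on indices $\{i,j\}$. For $i\ne j$ write
\[
a_{ij}=b_{ij}+ic_{ij},\qquad a_{ii}=b_{ii}+ic_{ii},
\]
with $b_{ii},c_{ii}>0$ real and $b_{ij},c_{ij}\in\mathbb C$. Positive definiteness of $B$ and $C$ gives $|b_{ij}|^2< b_{ii}b_{jj}$ and $|c_{ij}|^2<c_{ii}c_{jj}$.

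By the triangle inequality and Cauchy--Schwarz in $\mathbb R^2$,
\[
|a_{ij}|\le|b_{ij}|+|c_{ij}|\le\sqrt{b_{ii}b_{jj}}+\sqrt{c_{ii}c_{jj}}\le \sqrt{b_{ii}+c_{ii}}\,\sqrt{b_{jj}+c_{jj}}.
\]
Since $b_{ii}+c_{ii}\le\sqrt2\sqrt{b_{ii}^2+c_{ii}^2}=\sqrt2|a_{ii}|$, and likewise for $j$, this yields
\[
|a_{ij}|\le\sqrt2\sqrt{|a_{ii}||a_{jj}|}\le\sqrt2\max_k|a_{kk}|.
\]
Diagonal entries trivially satisfy the bound, so the inequality holds for all $i,j$.

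For sharpness we use the matrix $A_r$ from Gap~2, for which
\[
\frac{|(A_r)_{12}|}{\max_k|(A_r)_{kk}|}=\frac{2r}{\sqrt2}=\sqrt2\,r.
\]
This ratio tends to $\sqrt2$ as $r\uparrow1$, so no smaller constant works. The supremum is not attained, since the inequalities $|b_{ij}|^2<b_{ii}b_{jj}$ are strict; "sharp" is meant in the sense of the supremum, as elsewhere in the paper. The main point to verify is that each step loses nothing in the limit: $A_r$ simultaneously achieves equality in the triangle inequality (because $b_{12}=r$ and $ic_{12}=r$ are aligned), in Cauchy--Schwarz (because $b_{ii}=c_{ii}$), and asymptotically in the positive-definiteness bounds.
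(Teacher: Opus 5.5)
Your proof is correct. The paper gives no argument of its own for this lemma; it only cites George--Ikramov--Kucherov. Its only internal evidence for sharpness is the family $A_r$ of Gap~2, which you also use. You let $r\uparrow1$, whereas the paper stops at $r>1/\sqrt2$; the limit is what sharpness actually requires. So your upper-bound argument is a self-contained replacement for the citation.

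The essential point is that you group $b_{ii}$ with $c_{ii}$ at the same index, via Cauchy--Schwarz, before comparing with $|a_{ii}|$. Bounding $|b_{ij}|\le\max_k b_{kk}$ and $|c_{ij}|\le\max_k c_{kk}$ separately would only give the factor $2$, because the two maxima can occur at different indices.

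Your route also gives the stronger pairwise estimate $|a_{ij}|<\sqrt2\sqrt{|a_{ii}|\,|a_{jj}|}$ for $i\ne j$. This is the $\AD_n$ counterpart of Lemma~\ref{lem:diagonal-max} with the extra factor $\sqrt2$, and it shows directly that the supremum $\sqrt2$ is never attained.

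One small completion: for sharpness in a fixed dimension $n>2$, use $A_r\oplus(1+i)I_{n-2}$. It stays in $\AD_n$ and has the same ratio $\sqrt2\,r$.
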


Next, we prove Theorem~\ref{thm:generalized}.

\begin{proof}[Proof of Theorem~\ref{thm:generalized}]
The upper bound is the standard diagonal-to-entry reduction.  Accretive-dissipativity is hereditary under Schur complementation, so every active matrix belongs to some $\AD_m$.  By Corollary~\ref{cor:ad-diagonal}, every active diagonal entry is bounded above by
\[
       \frac{2(1+\omega^2)}{(1+\omega)^2}\max_i|a_{ii}|,
\]
which is at most
\[
       \frac{2(1+\omega^2)}{(1+\omega)^2}\max_{i,j}|a_{ij}|.
\]
Lemma~\ref{lem:ad-entry}, applied to each active matrix, gives the asserted upper bound.

It remains to prove the lower bound.  Let
\[
       M_0=\max_{i,j}|a_{ij}|,
       \qquad
       m_B=\lambda_{\min}(B),\quad m_C=\lambda_{\min}(C).
\]
For a Hermitian positive definite matrix $H$ with eigenvalues in $[m,M]$, every diagonal entry is at most $M$ and every off-diagonal entry has modulus at most $(M-m)/2$.  Hence, if $M_B=\lambda_{\max}(B)$ and $M_C=\lambda_{\max}(C)$, then every diagonal entry of $A$ has modulus at most
\[
       \sqrt{M_B^2+M_C^2}
       \le \omega\sqrt{m_B^2+m_C^2},
\]
whereas every off-diagonal entry satisfies
\[
       |a_{ij}|\le |b_{ij}|+|c_{ij}|
       \le \frac{M_B-m_B+M_C-m_C}{2}
       \le \frac{\omega-1}{2}(m_B+m_C)
       \le \omega\sqrt{m_B^2+m_C^2}.
\]
Thus
\begin{equation}\label{eq:ad-initial-max-lower-proof}
       M_0\le \omega\sqrt{m_B^2+m_C^2}.
\end{equation}
Fix $1\le k\le n-1$ and write the active matrix as $S_k=R_k+i T_k$.  Lemma~\ref{lem:schur-loewner-lower} gives
\[
       R_k\ge m_B I,
       \qquad
       T_k\ge m_C I.
\]
Consequently every diagonal entry of $S_k$ has modulus at least $\sqrt{m_B^2+m_C^2}$.  Combining this with \eqref{eq:ad-initial-max-lower-proof} gives
\[
       \max_{i,j}|a_{ij}^{(k)}|\ge \frac{M_0}{\omega}
       \qquad (1\le k\le n-1),
\]
and hence $\rho_n(A)\ge1/\omega$.

Sharpness of the lower bound follows from the diagonal example
\[
       A=(1+i)\diag(\omega,1,\ldots,1),
\]
for which $\kappa(B)=\kappa(C)=\omega$ and $\rho_n(A)=1/\omega$.
\end{proof}
Next, we illustrate what is sharp in the generalized setting $\AD_n$.
\begin{rem}\label{rem:ad-sharp}
The scalar and diagonal Schur-complement constants in Proposition~\ref{prop:scalar-ad} and Corollary~\ref{cor:ad-diagonal} are sharp for every $\omega$, and the full post-elimination lower bound in Theorem~\ref{thm:generalized} is also sharp.  The extra factor $\sqrt2$ in the full entrywise upper bound comes only from the loss of diagonal maximality in the accretive-dissipative class.  Lemma~\ref{lem:ad-entry} shows that this entry-versus-diagonal obstruction is real, but Theorem~\ref{thm:generalized} should not be read as claiming that the resulting full-entry upper constant is optimal.
\end{rem}

\section*{Acknowledgments} 
The author is grateful to his advisor, Professor Minghua Lin, for his encouragement and support.
 This work is supported by the China Scholarship Council, the Young Elite Scientists Sponsorship Program for PhD Students (China Association for Science and Technology), and the Fundamental Research Funds for the Central Universities at Xi'an Jiaotong University (Grant No.~xzy022024045).

\appendix

\section{Drury's determinant proof of Higham's conjecture}\label{app:drury}
This appendix records the determinant route due to Drury \cite{Drury2013}.  It is independent of the direct proof above and is included for comparison.

For a matrix \(A\in\mathbb C^{n\times n}\), its numerical range is defined by
\[
W(A)=\{x^*Ax:\ x\in\mathbb C^n,\ \|x\|_2=1\}.
\]
For \(0\le\alpha<\pi/2\), write
\[
S_\alpha
=
\{z\in\mathbb C:\ \Ree z>0,\ 
|\Imm z|\le (\tan\alpha)\Ree z\}.
\]
We say that \(A\) has numerical range contained in a sector of half-angle
\(\alpha\) if, after multiplication by a unimodular scalar, its numerical range
is contained in \(S_\alpha\); that is, if there exists \(\theta\in\mathbb R\)
such that
\[
W(e^{i\theta}A)\subset S_\alpha .
\]

We recall Drury's sectorial Fischer-type determinant inequality
\cite[Theorem~1]{Drury2013}.  We state it in a form adapted to the
notation used below.

\begin{thm}[{\cite[Theorem~1]{Drury2013}}]\label{thm:drury-fischer}
	Let
$
	A=
	\begin{bmatrix}
		A_{11}&A_{12}\\
		A_{21}&A_{22}
	\end{bmatrix}\in \mathbb C^{n\times n}
$
	have the numerical range is contained in a
	sector of half-angle \(\alpha\), where
	\(A_{11}\in\mathbb C^{p\times p}\) and
	\(A_{22}\in\mathbb C^{q\times q}\).  Put
$
	m=\min\{p,q\}.
$
	If
$
	0<m\alpha<\pi/2,
$
	then
	\[
	|\det A|
	\le
	\sec^2(m\alpha)\,
	|\det A_{11}|\,|\det A_{22}|.
	\]
\end{thm}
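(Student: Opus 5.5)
This is Drury's sectorial Fischer inequality, taken as a cited result; still, here is how I would prove it. The first step is a reduction. Multiplying $A$ by a unimodular scalar changes none of the moduli of the determinants, so we may assume $W(A)\subset S_\alpha$. Then $A=B+iC$ with $B=\Ree A>0$ and $-(\tan\alpha)B\le C\le(\tan\alpha)B$. Next apply the congruence $X=\diag(B_{11}^{-1/2},B_{22}^{-1/2})$. It preserves the sector condition, and $|\det A|/(|\det A_{11}||\det A_{22}|)$ is invariant under it. So we may assume $B_{11}=I_p$ and $B_{22}=I_q$. By a further block-diagonal unitary congruence we can also assume that the off-diagonal block $B_{12}$ of the real part is diagonalized by its singular value decomposition, while $C_{11}$ and $C_{22}$ are Hermitian with spectra in $[-\tan\alpha,\tan\alpha]$.

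The key tool is the representation of a sectorial matrix as $A=T^*(I+iK)T$ with $K$ Hermitian and $\|K\|\le\tan\alpha$, together with the fact that all eigenvalues of $A$ and of its Schur complements lie in $S_\alpha$. Using the Schur complement,
\[
\frac{\det A}{\det A_{11}\det A_{22}}=\det\bigl(I-A_{22}^{-1}A_{21}A_{11}^{-1}A_{12}\bigr)=\det(A_{22}^{-1}(A/A_{11})).
\]
The plan is to write $A_{22}^{-1}(A/A_{11})$ as a product whose eigenvalues $\mu_1,\dots,\mu_q$ satisfy two constraints. All but $m$ of them equal $1$, because $A_{21}A_{11}^{-1}A_{12}$ has rank at most $m$. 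The remaining $m$ have arguments constrained by the sector: each factor $1-\lambda_j$ is a ratio of sectorial quantities, so it lies in a region where $|1-\lambda_j|\le\sec^2\theta_j$ with $\sum\theta_j\le m\alpha$. To get this I would adapt the scalar argument: for $p=q=1$ one computes directly that $|a_{11}a_{22}-a_{12}a_{21}|\le\sec^2\alpha\,|a_{11}a_{22}|$, which follows from the sector constraint on the $2\times 2$ numerical range. For the general case I would use a compound-matrix or induction argument that peels off one rank direction at a time, with each step costing one angle $\alpha$.

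Finally, since $\log\sec^2$ is convex and increasing on $[0,\pi/2)$, the product $\prod_j\sec^2\theta_j$ under the constraint $\sum\theta_j\le m\alpha<\pi/2$ is maximized by concentrating the angle. That gives the bound $\sec^2(m\alpha)$, using $\prod\sec\theta_j\le\sec(\sum\theta_j)$, which is the cosine subadditivity $\prod\cos\theta_j\ge\cos(\sum\theta_j)$.

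The main obstacle is the eigenvalue-localization step: showing that the nontrivial eigenvalues of $A_{22}^{-1}(A/A_{11})$ have the claimed argument and modulus control, with the total angle budget $m\alpha$. I would try first to prove that $A_{22}^{-1}$ and $A/A_{11}$ are both $\alpha$-sectorial, and then use the fact that the eigenvalues of a product of two sectorial matrices have arguments bounded by the sum of the half-angles. I would combine this with a direct $m$-dimensional reduction of the rank-$m$ perturbation.
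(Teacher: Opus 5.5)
The paper does not prove this statement. It quotes it from Drury and only ever uses the case $p=1$. Judged on its own, your proposal has a genuine gap exactly where you locate the ``main obstacle''. Several steps are fine: the reductions, the identity $\det A/(\det A_{11}\det A_{22})=\det\bigl(A_{22}^{-1}(A/A_{11})\bigr)$, the count of at most $m$ nontrivial eigenvalues $\mu_j$, and the final inequality $\prod_j\cos\theta_j\ge\cos\bigl(\sum_j\theta_j\bigr)$. But the claim $|\mu_j|\le\sec^2\theta_j$ with $\sum_j\theta_j\le m\alpha$ is only asserted, and the mechanism you propose cannot deliver it. Grant that $A_{22}^{-1}$ and $A/A_{11}$ are $\alpha$-sectorial and that the eigenvalues of their product satisfy $|\arg\mu_j|\le 2\alpha$. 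This is information about arguments only: a quotient of two points of $S_\alpha$ can have any modulus, so nothing bounds $|\mu_j|$. In the Hermitian limit this route gives only $\mu_j>0$. Fischer's inequality needs $\prod_j\mu_j\le 1$, which comes from the order relation $A/A_{11}\le A_{22}$, not from angles. Your $2\times2$ computation also does not cover $p=1$ with $q>1$, because the low rank of $A_{21}A_{11}^{-1}A_{12}$ does not shrink $A_{22}$. Finally, the ``peel off one rank direction'' induction is left unspecified.

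No eigenvalue localization is needed. The missing ingredients are one modulus estimate and two reductions.

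(i) Let $T\in\mathbb C^{2\times2}$ with $W(T)\subset S_\alpha$. Write $T=H^{1/2}(I+iL)H^{1/2}$ with $H=(T+T^*)/2>0$ and $\|L\|_\infty\le\tan\alpha$. Then $|\det T|\le\sec^2\alpha\,\det H\le\sec^2\alpha\,|t_{11}|\,|t_{22}|$ by Hadamard's inequality.

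(ii) Take $p=1$, arbitrary $q$, and $A_{21}\ne0$. Put $v=A_{22}^{-1}A_{21}$ and $V=\begin{bmatrix}1&0\\0&v\end{bmatrix}$. The compression $V^*AV=\begin{bmatrix}a_{11}&A_{12}v\\ v^*A_{21}&v^*A_{21}\end{bmatrix}$ is again $\alpha$-sectorial. Moreover $\det(V^*AV)=(v^*A_{21})\,\sigma$, where $\sigma=a_{11}-A_{12}A_{22}^{-1}A_{21}=\det A/\det A_{22}$, and $v^*A_{21}=v^*A_{22}v\ne0$. So (i) gives $|\det A|\le\sec^2\alpha\,|a_{11}|\,|\det A_{22}|$.

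(iii) For $p\ge2$, let $A'$ be the Schur complement of $a_{11}$ in $A$, partitioned into blocks of sizes $p-1$ and $q$. It is $\alpha$-sectorial, with $\det A=a_{11}\det A'$ and $\det A_{11}=a_{11}\det A'_{11}$. Also $a_{11}\det A'_{22}$ is the determinant of the principal submatrix on $\{1\}\cup\{p+1,\dots,n\}$, so (ii) gives $|\det A'_{22}|\le\sec^2\alpha\,|\det A_{22}|$. Induction on $p$ then yields $|\det A|\le\sec^{2p}\alpha\,|\det A_{11}|\,|\det A_{22}|$.

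Swapping the two blocks by a permutation similarity gives $\sec^{2m}\alpha$. Your cosine inequality $\cos^m\alpha\ge\cos(m\alpha)$, valid since $0<m\alpha<\pi/2$, turns this into the stated bound $\sec^2(m\alpha)$.
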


Drury~\cite[p.~3130, last paragraph]{Drury2013} remarked that every
accretive-dissipative matrix \(A=B+iC\), with \(B,C>0\), has numerical range
in the open first quadrant and hence, after multiplication by a unimodular
scalar, in a sector of half-angle \(\pi/4\). It follows from his sectorial determinant
inequalities that
\[
\rho_n(A)\le 2\sqrt2
\]
for accretive-dissipative matrices.  In the scalar-block case \(p=1\),
his Theorem~1.1 also gives
\[
|\det A|\le 2\,|a_{11}|\,|\det A_{22}|,
\]
which Drury identified as the determinantal form related to Higham's
conjecture.  We next describe this route in detail, and then explain how
the same idea yields the strict bound for each fixed Higham matrix.

\subsection{Drury's determinant route}

Let \(I=\{1,\ldots,k\}\), and fix a remaining index \(j>k\).  Consider the
principal submatrix indexed by \(\{j\}\cup I\), ordered with the scalar index
\(j\) first:
\[
A[\{j\}\cup I,\{j\}\cup I]
=
\begin{bmatrix}
	a_{jj} & a_{jI}\\
	a_{Ij} & A[I,I]
\end{bmatrix}.
\]
The scalar Schur complement with respect to the block \(A[I,I]\) is precisely
the active diagonal entry produced after the first \(k\) steps of Gaussian
elimination:
\[
a_{jj}^{(k)}
=
a_{jj}-a_{jI}A[I,I]^{-1}a_{Ij}.
\]
Equivalently,
\[
a_{jj}^{(k)}
=
\frac{\det A[\{j\}\cup I,\{j\}\cup I]}{\det A[I,I]}.
\]

Assume that the numerical range of \(A\) is contained, after a unimodular
rotation, in a sector of half-angle \(\alpha\).  Since principal submatrices
inherit the same numerical-range inclusion, Theorem~\ref{thm:drury-fischer} applied to the above principal submatrix, with the scalar block
as the first block, gives the case \(p=1\).  Hence
\begin{equation}\label{eq:drury-diag}
	|a_{jj}^{(k)}|
	\le
	\sec^2\alpha\, |a_{jj}|.
\end{equation}

If \(A=B+iC\in\HH_n\), then \(e^{-i\pi/4}A\) has numerical range contained
in the sector \(S_{\pi/4}\).  Taking \(\alpha=\pi/4\) in
\eqref{eq:drury-diag} gives
\begin{equation*}
	|a_{jj}^{(k)}|\le 2|a_{jj}|.
\end{equation*}
Moreover, Higham matrices are closed under Schur complementation, and each
active Higham matrix has its largest entry in modulus on the diagonal; see Lemmas~\ref{lem:inverse-schur} and \ref{lem:diagonal-max}.
Therefore
\[
\max_{r,s}|a_{rs}^{(k)}|
=
\max_r |a_{rr}^{(k)}|
\le
2\max_r |a_{rr}|
=
2\max_{r,s}|a_{rs}|,
\]
which gives
\[
\rho_n(A)\le 2
\]
for Higham matrices.

For general accretive-dissipative matrices, the diagonal maximality property
is no longer available.  Instead one uses the entry-versus-diagonal estimate
in Lemma~\ref{lem:ad-entry}.  Combining \eqref{eq:drury-diag}, with
\(\alpha=\pi/4\), and Lemma~\ref{lem:ad-entry} yields
\[
\rho_n(A)\le 2\sqrt2 .
\]

\subsection{A refined form of Drury's route}

The preceding argument uses only the uniform sector \(S_{\pi/4}\).  For a
fixed Higham matrix, one can sharpen this observation.  Let
\(A=B+iC\in\HH_n\), and set
\[
T=e^{-i\pi/4}A=P+iQ,
\qquad
P=\frac{B+C}{\sqrt2},
\qquad
Q=\frac{C-B}{\sqrt2}.
\]
Then \(P>0\), and
\[
P+Q=\sqrt2\,C>0,
\qquad
P-Q=\sqrt2\,B>0.
\]
Hence
\[
-P<Q<P
\]
in the Loewner order.  Therefore
\[
H=P^{-1/2}QP^{-1/2}
\]
is Hermitian and satisfies
\[
-I<H<I.
\]
Since the dimension is finite, all eigenvalues of \(H\) lie strictly inside
\((-1,1)\).  Thus
\[
\delta_A:=\|H\|_\infty<1.
\]
For every nonzero vector \(x\), we have
\[
\frac{|\Imm x^*Tx|}{\Ree x^*Tx}
=
\frac{|x^*Qx|}{x^*Px}
\le
\delta_A .
\]
Consequently
\[
W(T)\subset S_{\alpha_A},
\qquad
\alpha_A:=\arctan\delta_A<\frac{\pi}{4}.
\]

Applying \eqref{eq:drury-diag} with this sharper angle \(\alpha_A\) gives,
for every active diagonal entry,
\[
|a_{jj}^{(k)}|
\le
\sec^2(\alpha_A)|a_{jj}|
=
(1+\delta_A^2)|a_{jj}|.
\]
Since \(1+\delta_A^2<2\), and since Higham matrices and their Schur
complements have their largest entry in modulus on the diagonal, it follows
that
\[
\rho_n(A)
\le
1+\delta_A^2
<2.
\]
Thus Drury's sectorial determinant method can be refined to recover the
strict Higham bound for each fixed matrix.  The resulting constant, however,
depends on \(A\) through
\[
\delta_A
=
\left\|
(B+C)^{-1/2}(C-B)(B+C)^{-1/2}
\right\|_\infty.
\]
In contrast, the main result of the present paper gives instead the sharp
condition-number-dependent estimate
\[
\rho_n(A)
\le
\frac{2(1+\omega^2)}{(1+\omega)^2},
\qquad
\omega=\max\{\kappa(B),\kappa(C)\},
\]
together with matching extremal examples for every fixed \(\omega\).
\end{document}